\documentclass[12pt]{article}
\usepackage[french]{babel}
\usepackage{graphicx}
\usepackage{amsmath}
\usepackage{amsfonts}
\usepackage{amssymb}
\usepackage{amsthm}
\usepackage{hyperref}
\usepackage[T1]{fontenc}
\usepackage{color}
\usepackage{mathtools}
\usepackage{comment}

\input xy
\xyoption{all}

\addtolength{\topmargin}{-2cm} \addtolength{\textheight}{3cm}
\addtolength{\oddsidemargin}{-1cm} \addtolength{\textwidth}{2cm}

\newtheorem{thm}{Th\'eor\`eme}[section]
\newtheorem{cor}[thm]{Corollaire}
\newtheorem{lem}[thm]{Lemme}
\newtheorem{pro}[thm]{Proposition}

\theoremstyle{definition}
\newtheorem{defi}[thm]{D\'efinition}

\newtheoremstyle{remarque}{}{}{}{}{\it}{.}{\newline}{}
\theoremstyle{remarque}
\newtheorem*{rem}{Remarque}
\newtheorem*{rems}{Remarques}

\newcommand{\asd}[5]{%
\setbox1=\hbox{\ensuremath{^{#1}}}%
\setbox2=\hbox{\ensuremath{_{#2}}}%
\setbox5=\hbox{\ensuremath{#5}}%
\hspace{\ifnum\wd1>\wd2\wd1\else\wd2\fi}%
\ensuremath{\copy5^{\hspace{-\wd1}\hspace{-\wd5}#1\hspace{\wd5}#3}%
_{\hspace{-\wd2}\hspace{-\wd5}#2\hspace{\wd5}#4}%
}}


\renewcommand{\cal}[1]{\mathcal{#1}}

\usepackage[OT2,T1]{fontenc}
\DeclareSymbolFont{cyrletters}{OT2}{wncyr}{m}{n}
\DeclareMathSymbol{\Sha}{\mathalpha}{cyrletters}{"58}
\DeclareMathSymbol{\Brusse}{\mathalpha}{cyrletters}{"42}

\newcommand{\n}{\mathbb{N}}
\newcommand{\z}{\mathbb{Z}}
\newcommand{\q}{\mathbb{Q}}


\newcommand{\id}{\mathrm{id}}

\renewcommand{\hom}{\mathrm{Hom}}
\newcommand{\ext}{\mathrm{Ext}}
\newcommand{\aut}{\mathrm{Aut}}
\newcommand{\saut}{\mathrm{SAut}}
\newcommand{\out}{\mathrm{Out}}
\newcommand{\sout}{\mathrm{SOut}}
\renewcommand{\int}{\mathrm{Int}\,}

\newcommand{\gal}{\mathrm{Gal}}
\newcommand{\Ger}{\mathrm{Ger}}

\newcommand{\TORS}{\underline{\mathrm{TORS}}}
\newcommand{\EspHom}{\underline{\mathrm{EspHom}}}
\newcommand{\GerbesHom}{\underline{\mathrm{GerbesHom}}}


\newcommand{\spec}{\mathrm{Spec}\,}
\newcommand{\Sp}{\mathrm{Sp}\,}

\newcommand{\CH}{\mathrm{CH}\,}


\newcommand{\br}{\mathrm{Br}\,}
\newcommand{\nr}{\mathrm{nr}}

\newcommand{\brnr}{{\mathrm{Br}_{\nr}}}



\newcommand{\inv}{\mathrm{inv}}


\newcommand{\gm}{\mathbb{G}_{\mathrm{m}}}
\renewcommand{\sl}{\mathrm{SL}}
\newcommand{\gl}{\mathrm{GL}}
\newcommand{\sln}{\mathrm{SL}_n}

\newcommand{\tor}{\mathrm{tor}}
\renewcommand{\ss}{\mathrm{ss}}
\newcommand{\ssu}{\mathrm{ssu}}
\newcommand{\red}{\mathrm{red}}
\newcommand{\torf}{{\rm torf}}
\newcommand{\f}{\mathrm{f}}
\renewcommand{\u}{\mathrm{u}}


\usepackage{marvosym}


\title{Le principe de Hasse pour les espaces homog\`enes : r\'eduction au cas des stabilisateurs finis}
\author{Cyril \textsc{Demarche} et Giancarlo \textsc{Lucchini Arteche}}

\date{}

\begin{document}

\maketitle

\begin{center}
{\it \`A la m\'emoire de Jean-Claude Douai}
\end{center}

\begin{abstract}
Nous montrons, pour une grande famille de propri\'et\'es $P$ des espaces homog\`enes, que $P$ vaut pour tout espace homog\`ene d'un groupe lin\'eaire connexe d\`es qu'elle vaut pour les espaces homog\`enes de $\sln$ \`a stabilisateur fini. Nous r\'eduisons notamment \`a ce cas particulier la v\'erification d'une importante conjecture de Colliot-Th\'el\`ene sur l'obstruction de Brauer-Manin au principe de Hasse et \`a l'approximation faible. Des travaux r\'ecents de Harpaz et Wittenberg montrent que le r\'esultat principal s'applique \'egalement \`a la conjecture analogue (dite conjecture (E)) pour les z\'ero-cycles.

\textbf{Mots cl\'es :} espaces homog\`enes, gerbes, principe de Hasse, obstruction de Brauer-Manin.

\textbf{Classification (MSC 2010) : }  11E72, 14L30.
\end{abstract}

\section{Introduction}\label{section intro}
Soit $k$ un corps de nombres et consid\'erons une famille de $k$-vari\'et\'es lisses et g\'eom\'etriquement int\`egres. On dit qu'une telle famille v\'erifie le principe de Hasse, ou principe local-global, si pour toute vari\'et\'e $X$ dans cette famille on a l'implication suivante :
\begin{equation}\label{PH}
X(k_v)\neq\emptyset,\,\forall\,v\in\Omega_k\,\Rightarrow\, X(k)\neq\emptyset,\tag{PH}
\end{equation}
o\`u $\Omega_k$ d\'esigne l'ensemble des places de $k$ et $k_v$ d\'esigne le compl\'et\'e correspondant \`a la place $v$. La question de la v\'erification du principe local-global est d\'ej\`a classique en g\'eom\'etrie arithm\'etique et remonte aux r\'esultats de Hasse lui-m\^eme sur les quadriques au d\'ebut du XX\textsuperscript{e} si\`ecle.

Une question analogue \`a celle du principe de Hasse est celle de l'approximation faible, laquelle s'int\'eresse \`a la densit\'e des $k$-points d'une vari\'et\'e $X$ dans le produit des points dans ses compl\'et\'es, i.e.
\begin{equation}\label{AF}
X(k)\neq\emptyset\, \Rightarrow\, \overline{X(k)}=\prod_{v\in\Omega_k}X(k_v).\tag{AF}
\end{equation}

Soit maintenant $G$ un $k$-groupe alg\'ebrique lin\'eaire connexe. On rappelle qu'un ($k$-)espace homog\`ene de $G$ est une $k$-vari\'et\'e $X$ munie d'une $k$-action de $G$ qui est transitive au niveau des $\bar k$-points, o\`u $\bar k$ d\'esigne une cl\^oture alg\'ebrique de $k$. Dans ce texte, on s'int\'eresse aux propri\'et\'es arithm\'etiques d\'ecrites ci-dessus pour de telles vari\'et\'es.\\

Les questions sur la validit\'e du principe de Hasse et de l'approximation faible pour les espaces homog\`enes des groupes lin\'eaires ont \'et\'e \'etudi\'ees notamment par Sansuc et Borovoi, avec par exemple la v\'erification des deux propi\'et\'es pour les espaces homog\`enes de groupes semi-simples et simplement connexes \`a stabilisateur semi-simple ou unipotent (cf.~\cite{Borovoi93}). On sait cependant que ce principe n'est pas v\'erifi\'e en g\'en\'eral et c'est d\'ej\`a le cas pour les espaces principaux homog\`enes de tores, ou de groupes semi-simples \`a partir du moment o\`u l'on ne les suppose plus simplement connexes (cf.~\cite{Sansuc81}). En revanche, on peut expliquer ces contre-exemples avec l'obstruction de Brauer-Manin. Dans ce sens, et g\'en\'eralisant les travaux de Sansuc, Borovoi a \'etabli que l'obstruction de Brauer-Manin est la seule obstruction au principe de Hasse et \`a l'approximation faible pour tout espace homog\`ene $X$ de $G$ \`a stabilisateur connexe, ou encore \`a stabilisateur ab\'elien si l'on suppose $G$ simplement connexe (cf.~\cite{Borovoi96}). Autrement dit, on a les propri\'et\'es
\begin{equation}\label{BMPH}
\left[\prod_{v\in\Omega_k} X(k_v)\right]^\brnr\neq\emptyset\Rightarrow X(k)\neq\emptyset,\tag{BMPH}
\end{equation}
et
\begin{equation}\label{BMAF}
X(k)\neq\emptyset\, \Rightarrow\, \overline{X(k)}=\left[\prod_{v\in\Omega_k}X(k_v)\right]^\brnr .\tag{BMAF}
\end{equation}
Pour la d\'efinition de l'\emph{ensemble de Brauer-Manin} $[\prod_{v\in\Omega_k} X(k_v)]^\brnr\subset\prod_{v\in\Omega_k} X(k_v)$, voir la section \ref{section BM}.

La question pour les stabilisateurs non connexes reste cependant ouverte, notamment pour les stabilisateurs finis : \`a propos du principe de Hasse, le cas ab\'elien a \'et\'e trait\'e par Borovoi, les auteurs ont obtenus quelques r\'esultats particuliers (cf.~\cite{Demarche}, \cite{GLA-AFPHEH}), et r\'ecemment Harpaz et Wittenberg ont obtenus une avanc\'ee substantielle dans le cas des stabilisateurs hyper-r\'esolubles (voir \cite{HW}, th\'eor\`eme B). Il est important de remarquer cependant que, d'apr\`es une conjecture de Colliot-Th\'el\`ene sur les vari\'et\'es rationnellement connexes, le m\^eme r\'esultat est attendu dans le cas g\'en\'eral.

Des \'enonc\'es similaires, ouverts eux aussi, peuvent \^etre donn\'es pour les z\'ero-cycles de degr\'e 1. On peut notamment se demander si l'existence d'un z\'ero-cycle de degr\'e 1 sur chaque compl\'et\'e $k_v$ implique l'existence d'un z\'ero-cycle de degr\'e 1 sur $k$, modulo une obstruction de Brauer-Manin. De fa\c con plus pr\'ecise, on conjecture l'exactitude de la suite suivante :
\begin{equation}\label{E}
\varprojlim_n \CH_0(X^c)/n \to \varprojlim_n \prod_{v\in\Omega_k} \CH_0(X^c\times_k k_v)/n \to \hom(\br X^c,\q/\z),\tag{E}
\end{equation}
o\`u $X^c$ est une compactification lisse de $X$ et $\CH_0$ d\'esigne le groupe des z\'ero-cycles modulo \'equivalence rationnelle (si $v$ est une place infinie, on remplace en fait $\CH_0$ par une version modifi\'ee) et la fl\`eche \`a droite est induite par l'accouplement de Brauer-Manin. Ces \'enonc\'es sont connus sous le nom de ``Conjecture (E)'', cf.~par exemple \cite[\S1.1]{Wittenberg}.\\

Le but de cet article est de montrer que l'on peut r\'eduire ces questions g\'en\'erales au cas des espaces homog\`enes de $\sl_{n,k}$ \`a stabilisateur fini. Il s'agit donc de suivre la d\'emarche entreprise par le deuxi\`eme auteur dans \cite{GLA-BMWA}, qui traitait de la propri\'et\'e \eqref{BMAF}. Notons que le passage de \eqref{BMAF} \`a \eqref{BMPH} ou \eqref{E} est d\'elicat car on perd justement l'hypoth\`ese sur l'existence de points rationnels. Il est important de noter que le fait d'enlever l'hypoth\`ese d'existence d'un point rationnel est r\'eellement n\'ecessaire pour obtenir les nouvelles applications. On obtient ainsi par exemple la cons\'equence suivante du r\'esultat principal de ce texte :

\begin{thm}[Cons\'equence du th\'eor\`eme \ref{main thm}]\label{thm principal}
Si l'obstruction de Brauer-Manin est la seule obstruction au principe de Hasse \eqref{BMPH} et \`a l'approximation faible \eqref{BMAF} pour tout espace homog\`ene de $\sl_{n,k}$ \`a stabilisateur fini, alors il en va de m\^eme pour tout espace homog\`ene $X$ d'un groupe lin\'eaire connexe.
\end{thm}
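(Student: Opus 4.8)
**The plan is to reduce the general case to the case of finite stabilizers by a sequence of dévissages on the connected linear group $G$ and on the stabilizer.** The statement asserts that the conjectural properties \eqref{BMPH} and \eqref{BMAF} for homogeneous spaces of $\sln$ with finite stabilizer propagate to all homogeneous spaces of arbitrary connected linear groups. Since the theorem is explicitly a consequence of the (unstated) main theorem \ref{main thm}, the honest strategy is to show that both \eqref{BMPH} and \eqref{BMAF} belong to the "grande famille de propriétés $P$" to which the main theorem applies. So the real work is to verify the hypotheses of \ref{main thm} for these two properties, then invoke it.

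**First I would set up the reductions independently of the main theorem, to understand why they should be admissible.** Let $X$ be a homogeneous space of a connected linear $k$-group $G$ with geometric stabilizer $H$. The standard dévissage proceeds in steps: (i) embed $G$ into some $\sln$ (using that $G$ is linear) and replace $X$ by the homogeneous space $\sln/H'$ obtained by inducing the action up, reducing to $G=\sln$, hence semisimple simply connected; here one must check that the arithmetic property is preserved under passing between $X$ and the induced space, which typically follows because the two spaces differ by a fibration with fibers that are homogeneous spaces of a connected group with connected stabilizer, for which \eqref{BMPH} and \eqref{BMAF} are known by Borovoi's results quoted in the excerpt. (ii) Use the exact sequence $1\to H^0\to H\to H/H^0\to 1$ to split off the connected part $H^0$ of the stabilizer: the map $X=\sln/H^0 \to \sln/H$ is a homogeneous fibration whose fibers are homogeneous spaces of a connected group with connected stabilizer, again controlled by Borovoi, reducing to $H$ finite. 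The crux is that each fibration step preserves the property $P$, which is precisely the kind of "admissibility" axiom the main theorem should encapsulate.

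**The hard part will be handling the loss of rational points in passing from \eqref{BMAF} to \eqref{BMPH}, exactly as the introduction warns.** The dévissage for \eqref{BMAF} in \cite{GLA-BMWA} can assume a rational point on the base of each fibration, which trivializes torsors and makes the fibration arguments clean; for \eqref{BMPH} one cannot assume this, so the gerbe-theoretic machinery (the $\TORS$, $\Ger$, $\GerbesHom$ formalism signaled by the preamble) is needed to track the obstruction class even when $X(k)=\emptyset$. The key technical point is that the relevant Brauer-Manin set on the total space maps to a nonempty Brauer-Manin set upstairs (on the $\sln$-space with finite stabilizer), and one must transport a rational point back down through the fibration while controlling the invariant Brauer group $\brnr$; this compatibility of Brauer-Manin sets along the dévissage is where the genuine content lies.

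**Finally, I would assemble the argument by invoking \ref{main thm}.** Having verified that \eqref{BMPH} and \eqref{BMAF} are stable under the two fibration dévissages above — i.e.\ that they lie in the admissible family $P$ — the theorem follows immediately: the hypothesis grants both properties for all homogeneous spaces of $\sln$ with finite stabilizer, which is exactly the base case of \ref{main thm}, and the main theorem then delivers $P$ for every homogeneous space of every connected linear group. The only substantive verification left to the reader of this corollary is that the Brauer-Manin formulations \eqref{BMPH} and \eqref{BMAF} satisfy the axioms of $P$, and I expect that to be the content of a short lemma preceding this corollary rather than a fresh argument here.
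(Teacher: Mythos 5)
Your overall strategy is exactly the paper's: th\'eor\`eme \ref{thm principal} is obtained by checking that the relevant arithmetic property is a \emph{bonne propri\'et\'e} and then invoking th\'eor\`eme \ref{main thm}; the verification you defer to a ``short lemma'' is indeed carried out in the paper (proposition \ref{prop P}), using stable birational invariance of $\brnr$ and of the existence of rational and local points, functoriality of the Brauer group for axiom (ii), and \cite[Prop.~3.4]{Borovoi96} together with the fibration method for axiom (iii). Your sketch of the d\'evissage internal to th\'eor\`eme \ref{main thm} is not needed for this corollary and is in any case much coarser than the actual proof (which uses the gerbe formalism, Ono's lemma and four reduction steps), but that does not affect the argument here.

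There is, however, one point where your plan as written would fail. You propose to show that \eqref{BMPH} and \eqref{BMAF} \emph{each} belong to the admissible family and to run the reduction for each separately. But \eqref{BMPH} alone is \emph{not} a bonne propri\'et\'e: in axiom (iii), descending the Hasse principle along a fibration $X\to Y$, a Brauer--Manin point on $X$ yields one on $Y$, hence a rational point $y\in Y(k)$; the fiber $X_y$ then has points at all \emph{finite} places and satisfies the Hasse principle by \cite[Prop.~3.4]{Borovoi96}, but it may have no real points, and to remedy this one must first move $y$ within $Y(k)$ so as to approximate the archimedean components of the adelic point --- which requires at least real approximation on $Y$. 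This is precisely why the introduction insists that no analogue of the result holds for \eqref{BMPH} alone, and why proposition \ref{prop P} lists the \emph{conjunctions} \eqref{BMPH}+\eqref{AR} and \eqref{BMPH}+\eqref{BMAF} as bonnes propri\'et\'es rather than \eqref{BMPH} by itself. The repair is immediate: take $P$ to be the single property \eqref{BMPH}+\eqref{BMAF} and verify the three axioms for that conjunction; with this adjustment your argument coincides with the paper's.
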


Notons qu'un travail r\'ecent \cite{HW} de Harpaz et Wittenberg d\'emontre la conjecture (E) pour les espaces homog\`enes de groupes lin\'eaires connexes en utilisant le th\'eor\`eme \ref{main thm} pour montrer qu'essentiellement l'exactitude de la suite \eqref{E} pour les espaces homog\`enes se r\'eduit au cas des espaces homog\`enes de $\sl_{n,k}$ \`a stabilisateurs finis.

On remarquera que, \`a la diff\'erence du r\'esultat principal de \cite{GLA-BMWA}, qui correspond au m\^eme \'enonc\'e que le th\'eor\`eme \ref{main thm}, mais seulement avec la propri\'et\'e \eqref{BMAF}, on n'a pas un \'enonc\'e analogue avec seulement la propri\'et\'e \eqref{BMPH}. En effet, on a besoin d'un minimum de propri\'et\'es d'approximation pour que notre preuve, qui utilise la m\'ethode des fibrations, fonctionne. Le mieux que l'on puisse faire pour l'instant est de remplacer \eqref{BMAF} par ``approximation r\'eelle'' dans l'\'enonc\'e ci-dessus, ce qui est une hypoth\`ese plus faible, mais non triviale tout de m\^eme, cf.~le d\'ebut de la section \ref{section thm principal}.

La preuve de ce r\'esultat s'inspire de celle dans \cite{GLA-BMWA}, mais il est n\'ecessaire de g\'en\'eraliser plusieurs outils pr\'esents dans celle-ci. Le point crucial est que dans un espace homog\`ene ayant un point rationnel, le stabilisateur de ce point est un $k$-groupe, alors qu'en g\'en\'eral, on dispose seulement d'une $k$-gerbe. La nouveaut\'e principale de ce texte est donc de donner un sens \`a des phrases telles que ``plonger une $k$-gerbe dans $\sl_{n,k}$'' ou ``faire agir une $k$-gerbe sur un tore''.\\

Le plan du texte est le suivant. Dans la section \ref{section prel}, on donne un rappel succinct sur l'obstruction de Brauer-Manin et un rappel plus complet sur la 2-cohomologie non ab\'elienne. On d\'efinit notamment, pour $K$ un corps parfait, les notions de $K$-lien et $K$-gerbe de plusieurs fa\c cons d\'ej\`a classiques mais qui, \`a notre connaissance, n'ont jamais \'et\'e toutes compar\'ees en un seul texte dans la litt\'erature. On conclut cette section en rappelant le lien entre la 2-cohomologie et les espaces homog\`enes sous ces diff\'erents points de vue. La section \ref{section gerbes dans SLn} est consacr\'ee au ``plongement d'une $K$-gerbe dans $\sl_{n,K}$''. On d\'emontre au passage une g\'en\'eralisation du fameux ``Lemme sans nom'' (Cor.~\ref{corollaire lemme sans nom}), qui affirme que deux espaces homog\`enes de $\sl_{n,K}$ ou de $\gl_{n,K}$ ayant la m\^eme $K$-gerbe associ\'ee sont $K$-stablement birationnels. La section \ref{section ono} s'occupe de l'action d'une $K$-gerbe sur un tore. On reprend ici ce qui a \'et\'e fait dans \cite[\S3]{GLA-BMWA} autour du lemme d'Ono pour l'\'etendre \`a ce nouveau cadre. Enfin, la section \ref{section thm principal} est consacr\'ee \`a l'\'enonc\'e et la preuve du th\'eor\`eme principal.

\paragraph*{Remerciements}
Les auteurs tiennent \`a remercier Olivier Wittenberg et Jean-Louis Colliot-Th\'el\`ene pour leurs commentaires.

Le premier auteur a b\'en\'efici\'e d'une aide de l'Agence Nationale de la Recherche portant les r\'ef\'erences ANR-12-BL01-0005 et ANR-15-CE40-0002. Le travail du deuxi\`eme auteur a \'et\'e partiellement soutenu par la FMJH via la bourse No ANR-10-CAMP-0151-02 dans le Programme des Investissements d'Avenir.

\section{Pr\'eliminaires}\label{section prel}

Dans tout le texte, on d\'esigne par $K$ un corps parfait, $\bar K$ une cl\^oture alg\'ebrique et $\Gamma_K$ le groupe de Galois absolu $\gal(\bar K/K)$. On r\'eserve la notation $k$ pour un corps de nombres. On note alors $\Omega_k$ l'ensemble des places de $k$ et, pour $v\in\Omega_k$, on note $k_v$ le compl\'et\'e correspondant.

Une $K$-vari\'et\'e alg\'ebrique $X$ sera toujours suppos\'ee lisse et g\'eom\'etriquement int\`egre. Pour $A/K$ une $K$-alg\`ebre, on note $X_A$ la $A$-vari\'et\'e obtenue par changement de base. Lorsque $A=\bar K$, on utilise plut\^ot $\bar X$ au lieu de $X_{\bar K}$ et on garde plus g\'en\'eralement cette notation pour toute vari\'et\'e d\'efinie sur $\bar K$, m\^eme si elle n'admet pas forc\'ement de $K$-forme. Il en va de m\^eme pour les $K$-groupes alg\'ebriques, les $K$-liens et les $K$-gerbes, not\'es respectivement $G$, $L$ et $\cal M$ dans la suite.

Si $G$ est un $K$-groupe alg\'ebrique lin\'eaire, on note
\begin{itemize}
\item $G^\circ$ la composante neutre de $G$;
\item $G^\f=G/G^\circ$ le groupe (fini) des composantes connexes de $G$;
\item $G^\u$ le radical unipotent de $G^\circ$;
\item $G^\red=G^\circ/G^\u$, qui est un groupe r\'eductif;
\item $G^\ss=D(G^\red)=[G^\red, G^\red]$, qui est un groupe semi-simple;
\item $G^\tor=G^\red/G^\ss$, qui est un tore;
\item $G^\ssu=\ker[G^\circ\twoheadrightarrow G^\tor]$, qui est une extension de $G^\ss$ par $G^\u$;
\item $G^\torf=G/G^\ssu$, qui est une extension de $G^\f$ par $G^\tor$;
\end{itemize}
qui sont tous d\'efinis sur $K$.

\subsection{L'obstruction de Brauer-Manin}\label{section BM}
Pour des d\'etails sur l'obstruction de Brauer-Manin, on renvoie le lecteur vers \cite[\S 5.1]{Skor}. Pour une vari\'et\'e $X$ sur un corps de nombres $k$, on consid\`ere le groupe de Brauer non ramifi\'e $\brnr X$. Ce dernier correspond au groupe de Brauer d'une compactification lisse de $X$ (laquelle existe toujours sur un corps de caract\'eristique nulle d'apr\`es les th\'eor\`emes de Nagata et Hironaka). Pour $v\in\Omega_k$, on d\'esigne par $\inv_v:\br k_v \to \q/\z$ l'invariant de Hasse tel qu'il est d\'efini par la th\'eorie du corps de classes locale. On consid\`ere le plongement diagonal de $X(k)$ dans le produit $\prod_{v\in\Omega_k} X(k_v)$. Alors on peut d\'efinir le sous-ensemble $[\prod_{v\in\Omega_k} X(k_v)]^{\brnr}$ des familles de points locaux telles que
\[\sum_{v\in\Omega_k}\inv_v(\alpha(P_v))=0,\quad\forall\,\alpha\in\brnr X,\]
o\`u $\alpha(P_v)\in\br k_v$ d\'esigne l'application d'\'evaluation de $\alpha$ au point $P_v$. Remarquons que la somme pr\'ec\'edente est finie pour les \'el\'ements $\alpha\in\brnr X$. L'ensemble $[\prod_{v\in\Omega_k} X(k_v)]^{\brnr}$ est dit \emph{ensemble de Brauer-Manin}. La th\'eorie du corps de classes globale nous fournit alors les inclusions suivantes :
\[\overline{X(k)}\subseteq \left[\prod_{v\in\Omega_k} X(k_v)\right]^{\brnr}\subseteq \prod_{v\in\Omega_k} X(k_v),\]
o\`u $\overline{X(k)}$ d\'esigne l'adh\'erence de $X(k)$ dans le produit. Puisque l'approximation faible \'etablit l'\'egalit\'e entre les ensembles de droite et de gauche, l'ensemble de Brauer-Manin fournit une obstruction \`a l'approximation faible d\`es que l'inclusion de droite est stricte. De m\^eme, si l'ensemble de Brauer-Manin est vide alors que le produit tout entier ne l'est pas, on trouve une obstruction au principe de Hasse.

Une conjecture de Colliot-Th\'el\`ene implique l'\'egalit\'e $\overline{X(k)}=[\prod_{v\in\Omega_k} X(k_v)]^{\brnr}$ pour tout espace homog\`ene $X$ d'un groupe lin\'eaire connexe $G$ (cf.~\cite[Intro]{ColliotBudapest}). On dit alors que l'obstruction de Brauer-Manin est la seule obstruction au principe de Hasse et \`a l'approximation faible.

\subsection{2-cohomologie non ab\'elienne}\label{section H2 nonab}

Soit $K$ un corps parfait. On fixe une cl\^oture alg\'ebrique $\bar K$ de $K$ et on note $\Gamma_K := \gal(\bar K / K)$. Dans toute la suite, on munit $\spec(K)$ du petit site \'etale.

Les notions de $K$-lien et de $K$-gerbe sont importantes pour d\'efinir et \'etudier la cohomologie galoisienne en degr\'e $2$ des groupes alg\'ebriques non commutatifs. On pr\'esente ici plusieurs points de vue diff\'erents, et n\'eanmoins \'equivalents, sur ces notions, que l'on utilisera dans la suite du texte.

On rappelle que tous les groupes alg\'ebriques sont suppos\'es lisses.

\subsubsection{Liens}
On dispose de deux notions de $K$-lien : la notion abstraite de Giraud (cf.~\cite[IV.1]{Giraud}) et la version concr\`ete de Borovoi (cf.~\cite[\S1]{Borovoi93}), dont une premi\`ere version avait \'et\'e pr\'esent\'ee par Springer dans \cite{SpringerH2} et dont la g\'en\'eralisation aux corps de caract\'eristique positive se trouve dans \cite{FSS}. 

\paragraph{Liens au sens de Giraud.}
On consid\`ere la cat\'egorie fibr\'ee (et m\^eme scind\'ee) sur le petit site \'etale de $\spec K$, not\'ee $\underline{\mathrm{LI}}(K)$, d\'efinie de la fa\c con suivante : la cat\'egorie fibre en $\spec A$ pour une $K$-alg\`ebre \'etale $A$ a pour objets les faisceaux en groupes sur $\spec A$, et si $H$ et $G$ sont des $A$-faisceaux en groupes, un morphisme de $H$ vers $G$ dans $\underline{\mathrm{LI}}(K)(A)$ est une section sur $\spec A$ du faisceau quotient $\underline{\mathrm{Int}}(G) \backslash \underline{\mathrm{Hom}}_{A\textup{-gr}}(H,G)$. On d\'efinit alors le champ (scind\'e) des $K$-liens, not\'e $\underline{\mathrm{LIENS}}(K)$, comme le champ associ\'e au pr\'echamp $\underline{\mathrm{LI}}(K)$. Un lien sur $\spec K$ est alors un objet de la cat\'egorie fibre $\underline{\mathrm{Liens}}(K)$ de $\underline{\mathrm{LIENS}}(K)$ en $\spec K$. On dispose en particulier d'un morphisme de $K$-champs (scind\'es)
\[\underline{\mathrm{lien}} : \underline{\mathrm{FAISCGR}}(K) \to \underline{\mathrm{LIENS}}(K),\]
qui \`a un faisceau en groupes $G$ associe le lien, not\'e $\underline{\mathrm{lien}}(G)$, d\'efini par ce faisceau.

Pour la suite, on note $\underline{\mathrm{Gralg}}(\bar K)$ la cat\'egorie dont les objets sont les $\bar K$-groupes alg\'ebriques lin\'eaires lisses, et les morphismes d'un $\bar K$-groupe $\bar H$ vers un $\bar K$-groupe $\bar G$ sont les morphismes de $\bar K$-groupes alg\'ebriques $\bar H \to \bar G$ modulo $\mathrm{Int}(\bar G)$. On dispose alors d'un foncteur naturel $\underline{\textup{lien}} : \underline{\mathrm{Gralg}}(\bar K) \to \underline{\mathrm{Liens}}(\bar K)$ r\'ealisant  la premi\`ere cat\'egorie comme une sous-cat\'egorie de la seconde.

On peut alors d\'efinir la cat\'egorie $\underline{\mathrm{Lialg}}(K)$ comme le produit 2-fibr\'e suivant  (voir par exemple \cite[\href{https://stacks.math.columbia.edu/tag/02X9}{02X9}]{SP}) : 
$$\underline{\mathrm{Lialg}}(K) := \underline{\mathrm{Liens}}(K) \times_{\underline{\mathrm{Liens}}(\bar K)} \underline{\mathrm{Gralg}}(\bar K) \, .$$
Plus concr\`etement, un objet de $\underline{\mathrm{Lialg}}(K)$ est un triplet $(L, \bar G, \varphi)$, o\`u $L$ est un lien sur $\spec(K)$, $\bar G$ un $\bar K$-groupe lin\'eaire lisse et $f$ un isomorphisme $\varphi : L_{\bar K} \xrightarrow{\sim} \underline{\textup{lien}}(\bar G)$ de liens sur $\spec(\bar K)$; un morphisme $(M, \bar H, \psi) \to (L, \bar G, \varphi)$ est un couple $(f,g)$, o\`u $f : M \to L$ est un morphisme de liens sur $\spec(K)$ et $g : \bar H \to \bar G$ un morphisme de $\bar K$-groupes alg\'ebriques modulo $\mathrm{Int}(\bar H)$, tel que $\varphi \circ f_{\bar K} = \underline{\textup{lien}}(g) \circ \psi$.
Les objets $(L, \bar G, \varphi)$ de $\underline{\mathrm{Lialg}}(K)$ sont appel\'es \emph{liens alg\'ebriques} sur $\spec K$, et ils seront parfois not\'es abusivement $L$.\\

Remarquons enfin que, puisque toute $\bar K$-alg\`ebre \'etale est un produit direct de copies de $\bar K$, le corollaire~9.29 de \cite{GiraudDesc} assure qu'une donn\'ee de descente sur une $K$-alg\`ebre \'etale $A=\prod_{i\in I}K_i$ avec $K_i/K$ des extensions de corps correspond \`a des donn\'ees de descente respectives sur chacun des $K_i$. On en d\'eduit que pour \'etudier les champs sur le petit site \'etale de $\spec K$, il suffit de se restreindre aux extensions finies de $K$ (qui sont toutes s\'eparables puisque $K$ est parfait).

\paragraph{Liens au sens de Springer-Borovoi.}
Pr\'esentons maintenant la seconde notion de $K$-lien. Soit $\bar G$ un $\bar K$-groupe alg\'ebrique lin\'eaire lisse. Rappelons (cf.~\cite[\S1]{Borovoi93} ou encore \cite[\S1]{FSS}) que le groupe $\saut(\bar G / K)$ des automorphismes semi-alg\'ebriques de $\bar G$ sur $K$ (not\'e aussi $\saut(\bar G)$ lorsque $K$ est sous-entendu) est form\'e des diagrammes commutatifs de la forme
\[\xymatrix{
\bar G \ar[r]^s \ar[d] & \bar G \ar[d] \\
\spec \bar K \ar[r]^{\Sp\sigma^{-1}} & \spec \bar K,
}\]
pour un certain $\sigma\in\Gamma_K$ et o\`u $\Sp\sigma$ d\'esigne l'automorphisme de $\spec \bar K$ induit par $\sigma$ sur le corps $\bar K$. On peut aussi voir ces objets comme des morphismes de $\bar K$-groupes $\sigma_*\bar G\to \bar G$, o\`u $\sigma_*\bar G$ est le $\bar K$-groupe alg\'ebrique $\bar G$ muni du morphisme structurel ``tordu''
\[\bar G\to\spec\bar K\xrightarrow{\Sp\sigma^{-1}}\spec \bar K.\]
Ou encore, en consid\'erant le $\bar K$-groupe $(\Sp\sigma)^*\bar G$, on peut v\'erifier ais\'ement qu'il est \emph{canoniquement} isomorphe \`a $\sigma_*\bar G$ en tant que $\bar K$-groupe, ce qui fait que l'on peut remplacer $\sigma_*\bar G$ par $(\Sp\sigma)^*\bar G$ ci-dessus.

Enfin, on rappelle que l'on dispose d'une suite exacte
\[1\to\aut_{\bar K}(\bar G)\to\saut(\bar G)\to\Gamma_K,\]
ainsi que du groupe quotient $\sout(\bar G):=\saut(\bar G)/\int(\bar G)$, o\`u $\int(\bar G)$ d\'esigne l'image du morphisme $\bar G\to\aut(\bar G)$ induit par la conjugaison.

Un $K$-lien sur $\bar G$ est la donn\'ee d'un morphisme de groupes continu $\kappa : \Gamma_K \to \sout(\bar G)$ qui scinde la suite exacte naturelle
\[1\to \out_{\bar K}(\bar G)\to \sout(\bar G) \to \Gamma_K,\]
et qui se rel\`eve en une section continue $f:\Gamma_K \to \saut(\bar G)$, On rappelle la d\'efinition de continuit\'e dans ce contexte (cf.~\cite[1.10]{FSS}). On sait que $\bar G$ admet une $K'$-forme pour une certaine extension finie $K'/K$. Cette $K'$-forme d\'efinit naturellement (cf.~\cite[1.4]{FSS}) un scindage $s:\Gamma_{K'}\to\sout(\bar G/K')\subset\sout(\bar G/K)$. L'application $f$ est dite continue si le morphisme
\[\Gamma_{K'}\to\aut_{\bar K}(\bar G):s_\tau^{-1}f_\sigma^{-1}f_{\sigma\tau},\]
est localement constant pour tout $\sigma\in\Gamma_K$.

On dispose \'egalement dans ce contexte de la notion suivante de morphisme de $K$-liens : si $(\bar G, \kappa)$ et $(\bar H, \lambda)$ sont deux $K$-liens, consid\'erons les morphismes de $\bar K$-groupes $\varphi : \bar G \to \bar H$ tels qu'il existe des relev\'es continus $g : \Gamma_K \to \saut(\bar G)$ et $h : \Gamma_K \to \saut(\bar H)$ de $\kappa$ et $\lambda$ respectivement, de sorte que pour tout $\sigma \in \Gamma_K$, le diagramme suivant de $\bar K$-groupes
\[
\xymatrix{
{\sigma_*\bar G} \ar[r]^{g_\sigma} \ar[d]_{\sigma_*\varphi} & {\bar G} \ar[d]^\varphi \\
{\sigma_*\bar H} \ar[r]^{h_\sigma} & {\bar H},
}
\]
commute (et o\`u la signification de $\sigma_*\varphi$ est \'evidente). On a alors une action naturelle de $\int(\bar H)$ sur ces morphismes, ce qui permet de d\'efinir un morphisme de liens $\varphi : (\bar G, \mu) \to (\bar H, \kappa)$ comme une classe de morphismes au sens pr\'ec\'edent modulo $\int(\bar H)$. On a ainsi d\'efini une cat\'egorie, not\'ee $\underline{K\textup{-Liens}}$.

\paragraph{Comparaison I.}
On dispose d'une \'equivalence de cat\'egories naturelle $\underline{\mathrm{Lialg}}(K) \to \underline{K\textup{-Liens}}$. Elle est d\'efinie ainsi : soit $(L,\bar G, \varphi)$ un objet de $\underline{\mathrm{Lialg}}(K)$. Puisque $L$ est un lien sur $\spec(K)$ et $\varphi : L_{\bar K} \to \underline{\textup{lien}}(\bar G)$ un isomorphisme de liens sur $\bar K$, on dispose d'une donn\'ee de descente naturelle sur $\underline{\textup{lien}}(\bar G)$ pour l'extension $\bar K / K$, c'est-\`a dire que pour tout $\sigma \in \Gamma_K$, on a un isomorphisme canonique dans $\underline{\mathrm{Liens}}(\bar K)$ de la forme
\[f_{\sigma} : \underline{\mathrm{lien}}(\sigma_* \bar G) \xrightarrow{(\Sp\sigma)^* \varphi^{-1}} (\Sp\sigma)^* L_{\bar K}  \to L_{\bar K} \xrightarrow{\varphi} \underline{\mathrm{lien}}(\bar G),\]
v\'erifiant $f_{\sigma \tau} = f_{\sigma} \circ {(\Sp\sigma)^* f_{\tau}}$, o\`u la premi\`ere fl\`eche se d\'eduit de \cite[V.1.2.2.2]{Giraud} et de l'isomorphisme $(\Sp\sigma)^*\bar G\cong\sigma_*\bar G$. Or, puisque
\[\mathrm{Isom}_{\underline{\mathrm{Liens}}(\bar K)}(\underline{\mathrm{lien}}(\sigma_* \bar G), \underline{\mathrm{lien}}(\bar G)) = \int(\bar G) \backslash \mathrm{Isom}_{\bar K\textup{-gr}}(\sigma_* \bar G, \bar G),\]
et que $\mathrm{Isom}_{\bar K\textup{-gr}}(\sigma_* \bar G, \bar G)$ correspond \`a la pr\'eimage de $\sigma\in\Gamma_K$ par rapport au morphisme naturel $\saut(\bar G)\to\Gamma_K$, on en d\'eduit que la donn\'ee de descente $(f_\sigma)$ d\'efinit un morphisme de groupes $\kappa_L : \Gamma_K \to \sout(\bar G)$ scindant le morphisme $\sout(\bar G) \to \Gamma_K$, dont on v\'erifie qu'il est continu et qu'il se rel\`eve en une application continue $\Gamma_K \to \saut(\bar G)$ (on utilise que l'isomorphisme entre $L_{\bar K}$ et $\underline{\mathrm{lien}}(\bar G)$ est d\'efini localement pour la topologie \'etale, donc sur une extension finie de $K$). Donc $(\bar G, \kappa_L)$ est un $K$-lien.

V\'erifions que l'on a bien d\'efini un foncteur $\underline{\mathrm{Lialg}}(K) \to \underline{K\textup{-Liens}}$. Soit un morphisme $\tau : L=(L,\bar G, \varphi) \to M=(M,\bar H,\psi)$ dans $\underline{\mathrm{Lialg}}(K)$. On a construit les $K$-liens $(\bar G, \kappa_L)$ et $(\bar H, \kappa_M)$ associ\'es respectivement \`a $L$ et $M$. La construction assure que $\tau$ induit un morphisme de $\bar K$-groupes alg\'ebriques $\bar \tau : \bar G \to \bar H$ d\'efini modulo $\int(\bar H)$. 
On a un diagramme commutatif dans $\underline{\mathrm{Liens}}(\bar K)$ :
\[
\xymatrix{
L_{\bar K} \ar[r]^{\tau_{\bar K}} \ar[d]^{\varphi} & M_{\bar K} \ar[d]^{\psi} \\
\underline{\mathrm{lien}}(\bar G) \ar[r]^{\bar \tau} &  \underline{\mathrm{lien}}(\bar H) \, . 
}
\]
Or $\tau : L \to M$ est un morphisme dans $\underline{\mathrm{Liens}}(K)$, donc $\bar \tau$ est muni d'une donn\'ee de descente naturelle pour l'extension $\bar K / K$, c'est-\`a dire que pour tout $\sigma \in \Gamma_K$, on a un diagramme commutatif dans $\underline{\mathrm{Liens}}(\bar K)$ de la forme
\[\xymatrix{
(\Sp\sigma)^*\underline{\mathrm{lien}}(\bar G) \ar[d]^{(\Sp\sigma)^*{\bar \tau}} \ar@{=}[r] & \underline{\mathrm{lien}}(\sigma_* \bar G) \ar[d]^{\sigma_*{\bar \tau}} \ar[r]^{g_\sigma} & \underline{\mathrm{lien}}(\bar G) \ar[d]^{\bar \tau} \\
(\Sp\sigma)^*\underline{\mathrm{lien}}(\bar H) \ar@{=}[r] & \underline{\mathrm{lien}}(\sigma_* \bar H) \ar[r]^{h_\sigma} & \underline{\mathrm{lien}}(\bar H),
}\]
o\`u $g_\sigma$ et $h_\sigma$ proviennent (via $\varphi$ et $\psi$) des donn\'ees de descente respectives pour $L_{\bar K}$ et $M_{\bar K}$. On v\'erifie alors que ces donn\'ees de descente d\'efinissent un morphisme $(\bar G_L, \kappa_L) \to (\bar G_M, \kappa_M)$ dans la cat\'egorie $\underline{K\textup{-Liens}}$. D'o\`u un foncteur $\underline{\mathrm{Lialg}}(K) \to \underline{K\textup{-Liens}}$.

Montrons que le foncteur ainsi d\'efini est une \'equivalence de cat\'egories. Un quasi-inverse est donn\'e par le foncteur suivant : si $(\bar G, \kappa)$ est un $K$-lien, alors il existe une extension finie galoisienne $K'/K$ et une $K'$-forme $G'$ de $\bar G$ correspondant \`a un scindage continu
\[\Gamma_{K'}\to\saut(\bar G/K')\subset\saut(\bar G/K),\]
du morphisme $\saut(\bar G/K')\to\Gamma_{K'}$, cf.~\cite[1.4]{FSS}. Par continuit\'e (cf.~\cite[1.10]{FSS}) on sait que, quitte \`a agrandir $K'$, on peut supposer que ce scindage rel\`eve $\kappa$. On dispose alors de l'objet $\underline{\mathrm{lien}}(G')$ dans la cat\'egorie $\underline{\mathrm{Liens}}(K')$ qui est bien d\'efini \`a isomorphisme \emph{unique} pr\`es. En effet, par continuit\'e, deux scindages diff\'erents induisant des $K'$-formes $G'_1$ et $G'_2$ sont conjugu\'es sur une extension galoisienne finie $K''/K'$. Cette donn\'ee, jointe aux isomorphismes $\underline{\mathrm{lien}}(\bar G_i')\to\underline{\mathrm{lien}}(\bar G)$ sur $\bar K$ fixent un unique isomorphisme de liens sur $K''$. Le fait que les deux scindages rel\`event $\kappa$ induit une donn\'ee de descente sur ce $K''$-morphisme, induisant un unique $K'$-isomorphisme $\underline{\mathrm{lien}}(G'_1)\to\underline{\mathrm{lien}}(G'_2)$. Ensuite, on peut d\'efinir un rel\`evement continu de l'application $\kappa:\Gamma_K \to \sout(\bar G)$ en prenant le scindage sur $\Gamma_{K'}$ et des translat\'es de ce scindage pour les classes lat\'erales de $\Gamma_{K'}\lhd\Gamma_K$. On en d\'eduit ainsi pour chaque $\sigma\in\Gamma_{K'/K}$ un morphisme
\[f_\sigma : (\Sp\sigma)^* \underline{\mathrm{lien}}(G') = \underline{\mathrm{lien}}(\sigma_* \bar G') \to \underline{\mathrm{lien}}(G'),\]
dans la cat\'egorie $\mathrm{Liens}(K')$. Puisque $\kappa$ est un morphisme de groupes, ces morphismes v\'erifient $f_{\sigma \tau} = f_\sigma \circ ((\Sp\sigma)^* f_\tau)$. Par cons\'equent, les morphismes $(f_\sigma)$ d\'efinissent une donn\'ee de descente sur $\underline{\mathrm{lien}}(G')$. Comme la cat\'egorie fibr\'ee $\underline{\mathrm{LIENS}}(K)$ des liens est un $K$-champ, cette donn\'ee de descente d\'efinit un objet $L_\kappa$ de $\underline{\mathrm{Liens}}(K)$ et un isomorphisme $\varphi : L_{\kappa,K'} \to \underline{\textup{lien}}(G')$, lequel induit un isomorphisme $\varphi : L_{\kappa,\bar K} \to \underline{\textup{lien}}(\bar G)$ et donc $(L_\kappa,\bar G, \varphi)$ est dans $\underline{\textup{Lialg}}(K)$. De m\^eme, si $\tau : (\bar G, \kappa) \to (\bar H, \lambda)$ est un morphisme dans $\underline{K\textup{-Liens}}$, on v\'erifie que celui-ci d\'efinit un \emph{unique} morphisme $\tau':\underline{\mathrm{lien}}(G')\to\underline{\mathrm{lien}}(H')$ pour des $K'$-formes $G'$ et $H'$ de $\bar G$ et $\bar H$ respectivement. On v\'erifie ensuite que ce morphisme dans $\underline{\mathrm{Lialg}}(K')$ est muni d'une donn\'ee de descente, ce qui permet d'en d\'eduire un morphisme $\widetilde{\tau} : L_\kappa \to L_\lambda$ puisque $\underline{\mathrm{LIENS}}(K)$ est un $K$-champ. Cela d\'efinit bien le foncteur souhait\'e $\underline{K\textup{-Liens}} \to \underline{\mathrm{Lialg}}(K)$. On peut alors v\'erifier que celui-ci est bien un quasi-inverse du premier.

\subsubsection{2-cohomologie non ab\'elienne}\label{section 2-coh}
On dispose de trois fa\c cons \'equivalentes de voir la 2-cohomologie galoisienne non ab\'elienne. Les deux premi\`eres, correspondant aux points de vue des cocycles et des extensions, sont bien connues dans le cadre de la cohomologie des groupes non ab\'elienne classique et s'adaptent donc au cadre de la cohomologie galoisienne via la notion de $K$-lien \`a la Springer-Borovoi. Le troisi\`eme point de vue, celui des gerbes, est issu du point de vue plus abstrait des liens \`a la Giraud.

\paragraph{Cocycles.}
Soit $L = (\bar G, \kappa)$ un $K$-lien. On munit $\bar G (\bar K)$ de la topologie discr\`ete. Un $2$-cocycle \`a valeurs dans $L$ est un couple $(f,u)$, o\`u $f : \Gamma_K \to \saut(\bar G)$ est une application continue qui rel\`eve $\kappa$ et $u : \Gamma_K \times \Gamma_K \to \bar G (\bar K)$ est une application continue v\'erifiant, pour tout $\sigma, \tau, \upsilon \in \Gamma_K$,
\begin{gather*}
f_{\sigma, \tau} = \int (u_{\sigma,\tau}) \circ f_{\sigma} \circ f_{\tau},\\
u_{\sigma, \tau \upsilon} \cdot f_{\sigma}(u_{\tau, \upsilon}) = u_{\sigma \tau, \upsilon} \cdot u_{\sigma, \tau},
\end{gather*}
o\`u $\int (u_{\sigma,\tau})$ d\'esigne le morphisme de conjugaison par l'\'el\'ement $u_{\sigma,\tau}$.

Deux $2$-cocycles $(f,u)$ et $(f',u')$ sont dits \'equivalents s'il existe une application continue $c : \Gamma_K \to \bar G (\bar K)$ telle que pour tout $\sigma, \tau \in \Gamma_K$, on a
\begin{gather*}
f'_\sigma = \int(c_{\sigma}) \circ f_{\sigma} \\
u'_{\sigma, \tau} = c_{\sigma \tau} \cdot u_{\sigma, \tau} \cdot f_\sigma(c_\tau)^{-1} \cdot c_\sigma^{-1} \, .
\end{gather*}

L'ensemble des $2$-cocycles modulo cette relation d'\'equivalence est not\'e $H^2(K, L)$, ou $H^2(K,G)$ si $L=\underline{\mathrm{lien}}(G)$. Une classe dans $H^2(K, L)$ est dite neutre si elle est repr\'esentable par un $2$-cocycle de la forme $(f,1)$. Elle correspond alors \`a une $k$-forme $G$ de $\bar G$ (cf.~\cite[1.4]{Borovoi93}) et on la note alors $n(G)$.

\paragraph{Extensions.}
Soit $L = (\bar G, \kappa)$ un $K$-lien. On munit toujours $\bar G (\bar K)$ de la topologie discr\`ete. On d\'efinit une extension li\'ee par $L$ comme une suite exacte de groupes topologiques
\[1 \to \bar G (\bar K) \to E \to \Gamma_K \to 1 \, ,\]
telle que le morphisme naturel $\Gamma_K \to \out(G(\bar K))$ induit par cette suite exacte soit \'egal \`a la compos\'ee de $\kappa$ avec le morphisme naturel $\sout(\bar G) \to \out(G(\bar K))$. On rappelle que par suite exacte de groupes topologiques on entend que la fl\`eche de gauche induit un hom\'eomorphisme avec son image, qui est un ferm\'e de $E$, alors que la fl\`eche de droite est un morphisme ouvert.

On note $\ext(\Gamma_K, L)$ l'ensemble des classes d'\'equivalence de telles extensions. Une extension est dite neutre si elle admet une section qui est un morphisme continu.

\paragraph{Gerbes.}
Une $K$-gerbe est un champ en groupo\"ides $\mathcal{M}$ sur le petit site \'etale de $\spec(K)$ tel que 
\begin{itemize}
\item il existe une $K$-alg\`ebre \'etale $A / K$ telle que $\mathcal{M}(A) \neq \emptyset$.
\item pour toute $K$-alg\`ebre \'etale $A$, pour tout $m, m' \in \mathcal{M}(A)$, il existe une $A$-alg\`ebre \'etale $A'$ et un isomorphisme $m \xrightarrow{\sim} m'$ dans $\mathcal{M}(A')$.
\end{itemize}
Comme on l'a mentionn\'e dans la section pr\'ec\'edente, on peut d\'eduire de \cite[Cor.~9.29]{GiraudDesc} que l'on peut remplacer ``alg\`ebre \'etale'' par ``extension finie'' dans les deux points ci-dessus.

Par exemple, pour tout $K$-groupe alg\'ebrique $G$ (ou plus g\'en\'eralement un faisceau \'etale en groupes), le champ des $G$-torseurs, not\'e $\TORS(G)$, est une $K$-gerbe. 

\`A une $K$-gerbe $\mathcal{M}$ on associe son lien $L_\mathcal{M}$ d\'efini de la fa\c con suivante : on choisit $m \in \mathcal{M}(K')$ pour une extension finie galoisienne $K'$ de $K$. Pour tout $\sigma \in \Gamma_K$, on dispose du foncteur naturel $\mathcal{M}(K') \to \mathcal{M}(K')$ induit par $\Sp\sigma$. On note ${^\sigma m}$ l'image de $m$ par ce foncteur. Par d\'efinition, il existe une extension finie $K''/K'$ telle que pour tout $\sigma \in \gal(K'/K)$, il existe un (iso)morphisme $\varphi_\sigma : {^\sigma m} \to m$ dans la cat\'egorie $\mathcal{M}(K'')$. On note alors $f_\sigma : \underline{\aut}({^\sigma m}) \to \underline{\aut}(m)$ le morphisme de $K''$-faisceaux en groupes d\'efini par $f_\sigma(\alpha) := \varphi_\sigma \circ \alpha \circ {\varphi_\sigma}^{-1}$. Or, d'apr\`es \cite[V.1.3.3.3]{Giraud}, on a un isomorphisme naturel $\underline{\aut}({^\sigma m}) \cong (\Sp\sigma)^* \underline{\aut}(m)$. Donc on a d\'efini un morphisme $f_\sigma : (\Sp\sigma)^* \underline{\aut}(m) \to \underline{\aut}(m)$ de $K''$-faisceau en groupes, donc un morphisme de $K''$-liens $f_\sigma : (\Sp\sigma)^* \underline{\mathrm{lien}}(\underline{\aut}(m)) \to \underline{\mathrm{lien}}(\underline{\aut}(m))$. On v\'erifie que cela munit $\underline{\mathrm{lien}}(\underline{\aut}(m))$ d'une donn\'ee de descente relativement \`a l'extension $K''/ K$, donc cela d\'efinit un lien sur $\spec K$ que l'on note $L_\mathcal{M}$ et, \`a isomorphisme unique pr\`es, ce lien ne d\'epend pas des choix de $K'$, $m$, $K''$, ni du choix des isomorphismes $\varphi_\sigma$. Si $\underline{\aut}(m)$ est repr\'esentable par un $\bar K$-groupe lin\'eaire lisse, alors on dispose d'un lien alg\'ebrique $L(\mathcal{M}) = (L_\mathcal{M}, \underline{\aut}(m), \id_{\underline{\textup{lien}}(\underline{\aut}(m))})$, bien d\'efini \`a isomorphisme unique pr\`es dans $\underline{\textup{Lialg}}(K)$. Dans ce cas, on dit que $\mathcal{M}$ est une gerbe alg\'ebrique. 

Un morphisme de $K$-gerbes est un morphisme de $K$-champs dont la source et le but sont des gerbes. Tout morphisme $\varphi : \mathcal{M} \to \mathcal{N}$ de $K$-gerbes (resp. de $K$-gerbes alg\'ebriques) induit un morphisme de $K$-liens (resp. de $K$-liens alg\'ebriques) $\underline{\mathrm{lien}}(\varphi) : L_\mathcal{M} \to L_\mathcal{N}$ (resp. $\underline{\mathrm{lien}}(\varphi) : L(\mathcal{M}) \to L(\mathcal{N})$), cf.~\cite[IV.2.2.3]{Giraud}. Le morphisme $\varphi$ est dit li\'e par $\underline{\mathrm{lien}}(\varphi)$. Une \'equivalence (resp.~un morphisme injectif, resp.~surjectif) de $K$-gerbes est un morphisme de gerbes li\'e par un isomorphisme (resp. par un morphisme injectif, resp.~surjectif) de liens. Bien entendu, un morphisme de liens est dit injectif (resp.~surjectif) si le morphisme de $\bar K$-groupes alg\'ebriques sous-jacent l'est.

Soit $L=(L, \bar G, \varphi)$ un lien alg\'ebrique sur $\spec K$ au sens de Giraud. Une $K$-gerbe li\'ee par $L$ est un couple $(\mathcal{M}, \rho)$, o\`u $\mathcal{M}$ est une $K$-gerbe et $\rho : L(\mathcal{M}) \to L$ un isomorphisme de liens alg\'ebriques. Deux telles $K$-gerbes $(\mathcal{M}, \rho)$ et $(\mathcal{M}', \rho')$ sont dites \'equivalentes s'il existe un (iso)morphisme de $K$-gerbes $\alpha : \mathcal{M} \to \mathcal{M}'$ tel que $\rho = \rho' \circ \underline{\textup{lien}}(\alpha)$.

On note $\Ger(K,L)$ l'ensemble des classes d'\'equivalence de $K$-gerbes alg\'ebriques li\'ees par $L$. La classe d'une gerbe $\mathcal{M}$ est dite neutre si $\mathcal{M}(K) \neq \emptyset$. Le faisceau $\underline{\aut}(m)$ pour $m\in\cal M(K)$ est dans ce cas repr\'esent\'e par un $K$-groupe alg\'ebrique $G$ qui est une $K$-forme du groupe $\bar G$ sous-jacent \`a $L$. On note alors $n(G)$ la classe de $\cal M$.

\paragraph{Comparaison II.} L'\'enonc\'e suivant affirme que les trois d\'efinitions pr\'ec\'edentes sont \'equivalentes :

\begin{pro}
Soit $L$ un $K$-lien. Alors on a des bijections canoniques et fonctorielles :
\[H^2(K, L) \xrightarrow{\sim} \ext(\Gamma_K, L) \xrightarrow{\sim} \Ger(K,L) \, ,\]
qui font correspondre les sous-ensembles de classes neutres de chacun des trois ensembles. 
\end{pro}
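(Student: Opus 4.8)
The plan is to prove the two bijections one at a time and then check separately that neutral classes match and that everything is natural in $L$. The first arrow $H^2(K,L)\to\ext(\Gamma_K,L)$ is non-abelian Schreier theory transported to the topological (Galois) setting. Given a $2$-cocycle $(f,u)$ with $L=(\bar G,\kappa)$, I would set $E=\bar G(\bar K)\times\Gamma_K$ as a set, with multiplication
\[
(g,\sigma)\cdot(h,\tau)=\bigl(g\cdot f_\sigma(h)\cdot u_{\sigma,\tau},\,\sigma\tau\bigr),
\]
topologized so that $\bar G(\bar K)$ is open and discrete and the projection $E\to\Gamma_K$ is continuous and open (this uses continuity of $f$ and $u$). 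The relation $f_{\sigma,\tau}=\int(u_{\sigma,\tau})\circ f_\sigma\circ f_\tau$ is exactly the condition that the induced outer action is $\kappa$ followed by $\sout(\bar G)\to\out(\bar G(\bar K))$, and the second cocycle identity is exactly associativity. Changing the cocycle by a continuous $c:\Gamma_K\to\bar G(\bar K)$ corresponds to an isomorphism of extensions inducing the identity on $\bar G(\bar K)$ and on $\Gamma_K$. Conversely, from an extension I would choose a continuous set-theoretic section $s$ (available since the quotient map is open and $\bar G(\bar K)$ is discrete), set $u_{\sigma,\tau}=s(\sigma)s(\tau)s(\sigma\tau)^{-1}$ and let $f_\sigma$ be the semi-algebraic lift of $\int(s(\sigma))$; here one uses that the outer class is $\kappa(\sigma)\in\sout(\bar G)$ to pick a representative in $\saut(\bar G)$, adjusting $s$ by $\bar G(\bar K)$ to land on the chosen lift. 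Two sections yield equivalent cocycles, so the two constructions are mutually inverse.

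The second arrow $\ext(\Gamma_K,L)\to\Ger(K,L)$ is the Giraud-theoretic heart and will be the \textbf{main obstacle}. From a gerbe $\mathcal M$ lié by $L$ the extension is easy: fix $m\in\mathcal M(\bar K)$ (which exists over a finite extension, and $\underline{\aut}(m)\cong\bar G$ since the gerbe is algebraic) and let $E$ be the group of pairs $(\sigma,\phi)$ with $\sigma\in\Gamma_K$ and $\phi:{}^\sigma m\xrightarrow{\sim}m$, with $(\sigma,\phi)(\tau,\psi)=(\sigma\tau,\,\phi\circ{}^\sigma\psi)$, projection $(\sigma,\phi)\mapsto\sigma$ and kernel $\aut_{\mathcal M(\bar K)}(m)=\bar G(\bar K)$; the topology comes from the representability of isomorphisms over finite extensions, and the induced outer action is $\kappa$ by the very construction of $L_{\mathcal M}$ recalled in the previous subsection. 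The choice of $m$ is irrelevant up to the equivalence relation on extensions, since any two $\bar K$-objects are isomorphic.

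The delicate direction is building a gerbe from an extension, which I would do by descent. Choose a finite Galois $K'/K$ over which $L$ becomes $\underline{\mathrm{lien}}(G')$ for a $K'$-form $G'$ of $\bar G$; over $K'$ the neutral gerbe lié by $L_{K'}$ is $\TORS(G')$. The semi-algebraic automorphisms $f_\sigma$ attached (via the first bijection) to the extension give, for each $\sigma\in\gal(K'/K)$, an equivalence $(\Sp\sigma)^*\TORS(G')\xrightarrow{\sim}\TORS(G')$, while the elements $u_{\sigma,\tau}\in\bar G(\bar K)$ supply the $2$-isomorphisms witnessing $f_{\sigma\tau}\cong f_\sigma\circ(\Sp\sigma)^*f_\tau$; the cocycle identity for $u$ is precisely the compatibility making this a descent datum for the $2$-stack of gerbes. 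Since gerbes on the small étale site satisfy $2$-descent, this datum glues to a $K$-gerbe $\mathcal M$, and one checks that the construction of its lien recovers $L$ and that the two directions are mutually inverse. The technical core is to verify the gerbe axioms and the lien of $\mathcal M$, and throughout to match the topological continuity hypotheses (continuity of $f,u$, openness of the section, discreteness of $\bar G(\bar K)$) with the étale-local finiteness and representability conditions built into the definitions of $K$-lien and $K$-gerbe.

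Finally I would treat neutrality and functoriality. A neutral cocycle $(f,1)$ gives the split extension, whose section is a continuous homomorphism, and under the second bijection this produces a gerbe possessing a $\Gamma_K$-equivariant $\bar K$-object, i.e. a $K$-object, so $\mathcal M(K)\neq\emptyset$; in each of the three descriptions the neutral class is pinned down by a $K$-form $G$ of $\bar G$ (through the descent datum defined by $f$, the splitting, and $\underline{\aut}(m)$ for $m\in\mathcal M(K)$ respectively), consistently giving $n(G)$. For naturality, a morphism of liens $L\to L'$ induced by a $\bar K$-morphism $\varphi:\bar G\to\bar H$ compatible with $\kappa,\lambda$ acts by pushforward on cocycles, by the pushout $\bar H(\bar K)\ast_{\bar G(\bar K)}E$ on extensions, and by the image gerbe along $\underline{\mathrm{lien}}(\varphi)$ on gerbes; a direct check shows each bijection intertwines these operations, so the bijections of the proposition are canonical and functorial.
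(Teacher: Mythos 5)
Your overall plan is correct and the first bijection is handled exactly as in the paper (Schreier theory with the twisted product $E=\bar G(\bar K)\times\Gamma_K$; note only that with the paper's sign conventions for non-abelian $2$-cocycles the multiplication must be $(g,\sigma)\cdot(h,\tau)=(g\cdot f_\sigma(h)\cdot u_{\sigma,\tau}^{-1},\sigma\tau)$ -- with $u_{\sigma,\tau}$ in place of $u_{\sigma,\tau}^{-1}$ associativity fails unless you also flip the convention in the cocycle identity). Where you genuinely diverge is the direction ``extensions vers gerbes'', which you rightly flag as the technical core: you propose to glue the trivial gerbe $\TORS(G')$ over a splitting field $K'$ by a $2$-descent datum whose $1$-morphisms are the equivalences induced by the $f_\sigma$ and whose $2$-morphisms are given by the $u_{\sigma,\tau}$, invoking $2$-descent for the $2$-stack of gerbes. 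The paper instead builds the gerbe $\mathcal{M}_E$ completely explicitly: its objects over an \'etale algebra $A$ are pairs $(Y,p)$ with $Y$ a discrete continuous $E$-set and $p:Y\to X_A=\hom_{K\text{-alg}}(A,\bar K)$ an $E$-\'equivariant map making $Y$ an $X_A$-torsor under $\bar G(\bar K)$; local non-emptiness comes from local splittings of $E$ and effectivity of descent is checked by hand via quotients $Y/\Gamma$. Your route is more conceptual but requires two points you only gesture at: (1) a reference or proof that gerbes on the small \'etale site satisfy $2$-descent, and (2) the finiteness bookkeeping ensuring that, after enlarging $K'$, the $f_\sigma$ descend to equivalences of $K'$-stacks and the $u_{\sigma,\tau}$ lie in $G'(K')$, so that the datum is genuinely indexed by $\gal(K'/K)$; the paper's construction sidesteps both by working with $E$-sets directly. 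Finally, you close the triangle by a direct map gerbes~$\to$~extensions (pairs $(\sigma,\phi:{}^\sigma m\to m)$), whereas the paper goes gerbes~$\to$~cocycles; the underlying data ($\varphi_\sigma$ and $u_{\sigma,\tau}=\varphi_{\sigma\tau}\circ{}^\sigma\varphi_\tau^{-1}\circ\varphi_\sigma^{-1}$) are the same, so this is only a cosmetic difference. Your treatment of neutral classes and functoriality matches the intended statement.
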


\begin{proof}
On rappelle seulement la construction des diff\'erentes bijections.

\paragraph{Cocycles vers extensions.}
\'Etant donn\'e un $2$-cocycle $(f,u)$, on construit le groupe $E := \bar G (\bar K) \times \Gamma_K$ avec le produit tordu suivant :
\[(g,\sigma) \cdot (h, \tau) := (g \cdot f_\sigma(h) \cdot u_{\sigma, \tau}^{-1}, \sigma \tau) \, ,\]
et on v\'erifie qu'il s'ins\`ere dans une suite exacte courte
\[1 \to \bar G(\bar K) \to E \to \Gamma_K \to 1 \, ,\]
dont le lien associ\'e est clairement $L$, ce qui d\'efinit la fl\`eche $H^2(K, L) \to \ext(\Gamma_K, L)$.

\paragraph{Extensions vers gerbes}
Soit une extension de groupes topologiques
\[1 \to \bar G(\bar K) \to E \xrightarrow{\pi} \Gamma_K \to 1 \, .\]

Soit $A$ une $K$-alg\`ebre \'etale, dont on note $X_A := \hom_{K-\textup{alg}}(A, \bar K)$ le $\Gamma_K$-ensemble fini associ\'e.
On d\'efinit la cat\'egorie $\mathcal{M}_E(A)$ dont les objets sont les paires $(Y, p)$ form\'ees d'un ensemble discret $Y$ muni d'une action continue de $E$ et d'un morphisme $p : Y \to X_A$ qui est $E$-\'equivariant (via le morphisme $\pi$) et tel que l'action de $E$ sur $Y$ induise sur $Y$ une structure de $X_A$-torseur sous $\bar G(\bar K)$ (i.e. $\bar G (\bar K)$ agit simplement transitivement sur les fibres de $p$). Un morphisme $(Y,p) \to (Y',p')$ dans la cat\'egorie $\mathcal{M}_E(A)$ est une application $E$-\'equivariante $\varphi : Y \to Y'$ au-dessus de $X_A$.

On a ainsi d\'efini une cat\'egorie fibr\'ee en groupo\"ides sur le petit site \'etale de $\spec(K)$. 

Les topologies des groupes concern\'es (discr\`ete pour $\bar G(\bar K)$, profinie pour $\Gamma_{K}$) et les propri\'et\'es des morphismes (ouverts et continus) nous disent que l'extension $E$ est localement scind\'ee : il existe une extension finie $K'$ de $K$ (contenue dans $\bar K$) telle que l'extension $E'$, obtenue en tirant $E$ en arri\`ere par le morphisme $\Gamma_{K'} \to \Gamma_K$, est scind\'ee. Une section $s : \Gamma_{K'} \to E'$ permet de d\'efinir un $\bar G (\bar K)$-torseur $p_s : Y_s := E/s(\Gamma_{K'}) \to \Gamma_K/\Gamma_{K'} \xrightarrow{\sim} X_{K'}$, de sorte que $(Y_s,p_s)$ est un objet de $\mathcal{M}_E(K')$. En particulier, $\mathcal{M}_E(K') \neq \emptyset$. Par un argument de continuit\'e, on montre \'egalement que deux objets de $\mathcal{M}_E(K')$ sont localement isomorphes, pour toute extension finie $K'$ de $K$ (cela r\'esulte du fait que deux sections de $E$ sont localement conjugu\'ees par un \'el\'ement de $\bar G (\bar K)$).

V\'erifions que $\mathcal{M}_E$ est un champ. Soit $K'/K$ une extension finie de corps et $K''/K'$ une extension finie galoisienne. On fixe un plongement $K'' \subset \bar K$. Notons $\Gamma := \gal(K''/K')$. Le groupe $\Gamma$ agit naturellement (\`a droite) sur $X_{K''}$ via la formule $\phi \cdot \sigma := \phi \circ \sigma$ pour $\sigma \in \Gamma$ et $\phi \in X_{K''}$. Ainsi, pour tout $\sigma \in \Gamma$, on peut expliciter le foncteur image inverse $\mathcal{M}_E(K'') \to \mathcal{M}_E(K'')$ induit par $\sigma$, de la fa\c con suivante : l'image d'un objet $(Y,p)$ de $\mathcal{M}_E(K'')$ est l'objet $(Y^\sigma, p_\sigma) \in \mathcal{M}_E(K'')$ d\'efini par le diagramme cart\'esien suivant :
\[
\xymatrix{
Y^{\sigma} \ar[r] \ar[d]^{p_\sigma} & Y \ar[d]^p \\
X_{K''} \ar[r]^{\cdot \sigma} & X_{K''} \, , 
}
\]
et on dispose de la description analogue pour les morphismes. On en d\'eduit qu'une donn\'ee de descente sur $(Y,p)$ relativement \`a l'extension $K''/K'$ \'equivaut \`a la donn\'ee, pour tout $\sigma  \in \Gamma$, d'un (iso)morphisme $f_{\sigma} : (Y^\sigma,p_\sigma) \to (Y,p)$, tel que pour tous $\sigma, \tau \in \Gamma$, $f_{\sigma \tau} = f_{\tau} \circ \tau^*(f_\sigma)$. Par cons\'equent, cela \'equivaut \`a la donn\'ee, pour tout $\sigma \in \Gamma$, d'une application $E$-\'equivariante $h_\sigma : Y \to Y$ s'ins\'erant dans le diagramme commutatif suivant :
\[
\xymatrix{
Y \ar[r]^{h_\sigma} \ar[d]^p & Y \ar[d]^p \\
X_{K''} \ar[r]^{\cdot \sigma^{-1}} & X_{K''} \, ,
}
\]
de sorte que $h_{\sigma \tau} = h_\sigma \circ h_\tau$. Cela d\'efinit donc une action (\`a gauche) de $\Gamma$ sur $Y$ compatible avec l'action (\`a gauche) de $\Gamma$ sur $X_{K''}$ (via $\sigma \cdot \phi := \phi \circ \sigma^{-1}$). En outre, on voit que ces actions de $\Gamma$ commutent aux actions de $E$ sur $Y$ et $X_{K''}$. Par cons\'equent, une telle donn\'ee de descente permet de d\'efinir une application $p_0 : Y_0 := Y/\Gamma \to X_{K''}/\Gamma = X_{K'}$ $E$-\'equivariante, et un \'el\'ement $(Y_0, p_0) \in \mathcal{M}_E(K')$ muni d'un isomorphisme entre son image dans $\mathcal{M}_E(K'')$ et $(Y,p)$, compatible \`a la donn\'ee de descente. Par cons\'equent, les donn\'ees de descente dans $\mathcal{M}_E$ sont effectives. Un raisonnement analogue assure que les donn\'ees de descente pour les morphismes sont \'egalement effectives. Donc $\mathcal{M}_E$ est un $K$-champ, localement non vide et localement connexe, donc c'est une $K$-gerbe.

Enfin, si $K' \subset \bar K$, \'etant donn\'e un objet $(Y,p) \in \mathcal{M}_E(K')$, on observe que le choix d'un point $y \in Y$ au-dessus du point naturel de $X_{K'}$ fixe un isomorphisme entre le $L$-faisceau en groupes $\underline{\aut}_L(Y,p)$ et $\bar G(\bar K) \cap N_E(\underline{\textup{Stab}}_E(y_L))$. On en d\'eduit alors un isomorphisme canonique de $K$-liens (ind\'ependant de $(Y,p)$ et de $y$) entre $L_E$ et $L$.

\paragraph{Gerbes vers cocycles}
Soit $\mathcal{M}$ une $K$-gerbe alg\'ebrique et $\tau : L(\mathcal{M}) \to L$ un isomorphisme. Posons alors $L = (\bar G, \kappa)$. Choisissons une section $m \in \mathcal{M}(K')$ pour une extension finie galoisienne $K'/K$. Le morphisme $\tau$ induit un isomorphisme de $\bar K$-groupes $\bar \tau : \underline{\aut}(m) \xrightarrow{\sim} {\bar G}$, bien d\'efini modulo $\textup{Int}(\bar G)$. Suivant la construction du lien $L(\cal{M})$, on obtient pour $\sigma \in \Gamma_K$ un isomorphisme $\varphi_\sigma : {^\sigma m} \to m$ dans la cat\'egorie $\mathcal{M}(K'')$, o\`u $K''$ est une extension finie de $K'$, et l'isomorphisme naturel $\asd{\sigma}{}{}{}{\bar \tau} : \underline{\aut}(\asd{\sigma}{}{}{}{m})\cong\sigma_*\bar G$ nous permet par ailleurs de d\'efinir un morphisme de $\bar K$-groupes $f_\sigma : \sigma_* \bar G \to \bar G$ par $f_\sigma(\alpha) := \varphi_\sigma \circ \alpha \circ {\varphi_\sigma}^{-1}$. Le morphisme $f_\sigma$ est alors dans $\saut(\bar G)$ et, puisque c'est avec ces morphismes que l'on construit le lien associ\'e \`a $\cal M$, son image dans $\sout(\bar G)$ est exactement $\kappa_\sigma$.  En outre, le fait que les morphismes $\varphi_\sigma$ soient d\'efinis sur $K''$ assure que l'application $\sigma \mapsto f_\sigma$ est continue. Si l'on note alors $\asd{\sigma}{}{}{\tau}{\varphi}$ l'image du morphisme $\varphi_\tau$ par le foncteur induit par $\sigma$ et que l'on pose $u_{\sigma, \tau} := \varphi_{\sigma \tau} \circ {^\sigma \varphi_\tau}^{-1} \circ \varphi_\sigma^{-1} \in \underline{\aut}(m)(\bar K) = {\bar G}(\bar K)$ (identification via $\bar \tau$), un calcul simple assure que l'on a les relations suivantes :
\begin{align*}
f_{\sigma \tau} = \int(u_{\sigma, \tau}) \circ f_\sigma \circ f_\tau\qquad & \text{dans }\,\saut(\bar G),\\
u_{\sigma, \tau \upsilon} \cdot f_\sigma(u_{\tau, \upsilon}) = u_{\sigma \tau, \upsilon} \cdot u_{\sigma, \tau}\qquad &\text{dans }\,{\bar G}(\bar K).
\end{align*}
Par cons\'equent $(f,u) \in Z^2(K, L)$. En outre, on v\'erifie que des choix diff\'erents de $\bar \tau$ et $m \in \mathcal{M}(\bar K)$ d\'efinissent des cocycles \'equivalents \`a $(f,u)$.
\end{proof}

\begin{rem}
La correspondance entre gerbes et cocycles est d\'etaill\'ee dans \cite{Br}. Pour le sens ``gerbes vers cocycles'', cf.~les sections 2.2 \`a 2.4; pour le sens ``cocycles vers gerbes'', cf.~2.6 \`a 2.8. 
\end{rem}

\subsubsection{Fonctorialit\'e}\label{section fonct}
Soient $L=(\bar G,\kappa)$ et $L'=(\bar G',\kappa')$ des $K$-liens et soit $\varphi:L\to L'$ un morphisme de $K$-liens. \`A la diff\'erence du cas classique, un tel morphisme n'induit pas forc\'ement un morphisme entre les ensembles de 2-cohomologie non-ab\'elienne. Cependant, on a toujours une relation, not\'ee
\[\varphi^{(2)}:H^2(K,L)\multimap H^2(K,L'),\]
et qui est d\'efinie comme suit. On dit que deux classes $\eta\in H^2(K,L)$ et $\eta'\in H^2(K,L')$ sont reli\'ees si :

Dans le langage des cocycles, s'il existe des cocycles $(f,u)$ et $(f',u')$ repr\'esentant respectivement $\eta$ et $\eta'$ tels que, pour tout $g\in\bar G(\bar K)$ et pour tout $\sigma,\tau\in\Gamma_K$,
\[f'_\sigma(\varphi(g))=\varphi(f_\sigma(g))\quad\text{et}\quad u'_{\sigma,\tau}=\varphi(u_{\sigma,\tau}).\]

Dans le langage des extensions, s'il existe des extensions $E$ et $E'$ repr\'esentant respectivement $\eta$ et $\eta'$ et un morphisme d'extensions
\[\xymatrix{
1 \ar[r] & \bar G(\bar K) \ar[r] \ar[d]^{\varphi} & E \ar[r] \ar[d] & \Gamma_K \ar[r] \ar@{=}[d] & 1,\\
1 \ar[r] & \bar G'(\bar K) \ar[r] & E' \ar[r] & \Gamma_K \ar[r] & 1.
}\]

Dans le langage des gerbes, s'il existe des gerbes $\cal M$ et $\cal M'$ repr\'esentant respectivement $\eta$ et $\eta'$ et un morphisme de gerbes $\cal M\to\cal M'$ li\'e par $\varphi$.\\

On rappelle qu'une telle relation peut \^etre vide en g\'en\'eral et que, dans le cas particulier o\`u $\varphi$ est surjective ou $\bar G'$ est ab\'elien, elle correspond \`a une application $H^2(K,L)\to H^2(K,L')$. Ceci est un exercice facile \`a v\'erifier par exemple dans le langage des cocycles.\\

Enfin, mentionnons qu'il est facile de d\'eduire de ce qui pr\'ec\`ede comment d\'efinir une relation de restriction $H^2(K,L)\to H^2(K',L)$ pour $L$ un $K$-lien et $K'/K$ une extension.

\subsection{Espaces homog\`enes}\label{section esp hom}

Soit $K$ un corps parfait, $G$ un $K$-groupe alg\'ebrique lin\'eaire lisse et $X$ un espace homog\`ene (\`a droite) de $G$, \`a stabilisateurs lisses. On note $x \in X(\bar K)$ un point g\'eom\'etrique de $X$, dont on note $\bar H$ le stabilisateur.

Une construction due \`a Springer permet d'associer au couple $(X,x)$ un $K$-lien $L_X = (\bar H, \kappa_X)$, ainsi qu'une classe $\eta_X \in H^2(K, L_X)$ (on omet abusivement le choix de $x$ dans la suite), tels qu'il existe un morphisme naturel $\rho:L_X\to \underline{\mathrm{lien}}(G)$ reliant $\eta_X$ \`a $n(G)$ (cf.~\cite[Prop.~1.27]{SpringerH2} ou \cite[IV.5.1.3.1]{Giraud}). Rappelons sa construction avec les diff\'erents points de vue mentionn\'es plus haut.\\

Avec le point de vue de Springer-Borovoi (cf.~\cite[\S7]{Borovoi93} ou \cite[5.1]{FSS}), pour tout $\sigma \in \Gamma$, il existe $g_\sigma \in G(\bar K)$ tel que ${^\sigma x} = x \cdot g_\sigma$, et on peut supposer l'application $\sigma \mapsto g_\sigma$ continue. Alors, si l'on note $\sigma_*$ l'automorphisme $\sigma$-semi-alg\'ebrique de $\bar G$ induit naturellement par $\sigma$ (cf.~\cite[1.4]{Borovoi93}), on voit que l'automorphisme $\int(g_\sigma) \circ \sigma_*$ est aussi $\sigma$-semi-lin\'eaire mais de plus laisse $\bar H$ invariant. On note $f_\sigma$ sa restriction \`a $\bar H$. On a donc une application continue $f : \Gamma_K \to \saut(\bar H)$ qui induit un morphisme continu (donc un $K$-lien) $\kappa_X : \Gamma_K \to \sout(\bar H)$. On note alors $L_X := (\bar H, \kappa_X)$ le lien correspondant. On pose enfin $u_{\sigma, \tau} := g_{\sigma \tau} \cdot \asd{\sigma}{}{}{\tau}{g} \cdot g_\sigma^{-1} \in {\bar H}(\bar K)$, et on v\'erifie que $\eta_X := [(f,u)]$ est un \'el\'ement de $H^2(K,L_X)$ ne d\'ependant pas du choix des $g_\sigma$.

Par construction, on voit que $\eta_X$ est neutre si et seulement s'il existe un $K$-torseur $P$ sous $G$ et un $K$-morphisme $G$-\'equivariant $P \to X$.\\

Avec le point de vue des extensions de groupes (cf.~\cite[5.1]{FSS}), on d\'efinit $E_X$ comme le sous-groupe de $G(\bar K) \rtimes \Gamma_K$ form\'e des \'el\'ements $(g,\sigma)$ tels que ${^\sigma x} = x \cdot g$. Alors on dispose d'une suite exacte naturelle de groupes topologiques
\[1 \to {\bar H}(\bar K) \to E_X \to \Gamma_K \to 1 \, ,\]
telle que le lien associ\'e est exactement $L_X$ et dont la classe dans $H^2(K, L_X)$ est exactement $\eta_X$.\\

Avec le point de vue de Giraud (cf.~\cite[IV.5.1]{Giraud}), on peut associer \`a $X$ la cat\'egorie fibr\'ee $\mathcal{M}_X$ sur le petit site \'etale de $\spec K$ des rel\`evements de $X$ en un torseur sous $G$, i.e. pour toute $K$-alg\`ebre \'etale $A$, la cat\'egorie fibre $\mathcal{M}_X(A)$ a pour objets les couples $(P,\alpha)$ o\`u $P \to \spec A$ est un torseur sous $G$ et $\alpha : P \to X_A$ est un $A$-morphisme $G$-\'equivariant, et pour morphismes les morphismes de $G$-torseurs commutant aux morphismes vers $X_A$. Alors $\mathcal{M}_X$ est une gerbe li\'ee par $L_X$, et sa classe dans $H^2(K, L_X)$ est exactement $\eta_X$. En outre, on dispose d'un morphisme de gerbes naturel $\varphi_X : \mathcal{M}_X \to \TORS(G)$ d\'efini par $(P,\alpha) \mapsto P$.
Pour pr\'eciser la fonctorialit\'e\footnote{Les auteurs remercient le rapporteur de cette suggestion pertinente.} de cette construction, d\'efinissons $\EspHom(K)$ comme la cat\'egorie des paires $(G,X)$ o\`u $G$ est un $K$-groupe alg\'ebrique lin\'eaire lisse et $X$ un $K$-espace homog\`ene de $G$ \`a stabilisateurs lisses, et $\GerbesHom(K)$ comme la cat\'egorie des triplets $(G,\mathcal{M}, \varphi)$ o\`u $G$ est un $K$-groupe alg\'ebrique lin\'eaire lisse, $\mathcal{M}$ une $K$-gerbe alg\'ebrique et $\varphi : \mathcal{M} \to \TORS(G)$ un morphisme injectif de $K$-gerbes. On dispose alors d'un foncteur ``gerbe de Springer'' :
\begin{align*}
\eta \colon \EspHom(K) &\to \GerbesHom(K) \\
(G,X)  &\mapsto (G, \mathcal{M}_X, \varphi_X) \, .
\end{align*}

\section{Gerbes et espaces homog\`enes de $\sl_{n,K}$}\label{section gerbes dans SLn}
Les r\'esultats suivants sont cruciaux pour la construction g\'eom\'etrique utilis\'ee dans la d\'emonstration du th\'eor\`eme principal. Ils g\'en\'eralisent des outils tout \`a fait courants dans le cadre o\`u l'on poss\`ede un point rationnel, comme le plongement d'un $K$-groupe affine donn\'e dans $\sl_{n,K}$, ou le c\'el\`ebre lemme sans nom, qui \'etablit en particulier la stable birationalit\'e des quotients correspondants \`a deux tels plongements.

\subsection{``Plonger une $K$-gerbe affine dans $\sl_{n,K}$''}
Soit $K$ un corps parfait et $\bar K$ une cl\^oture alg\'ebrique de $K$. \'Etant donn\'e un $K$-groupe $G$, un plongement de $G$ dans $\sl_{n,K}$ permet de construire un espace homog\`ene $X=G\backslash\sl_{n,K}$ muni d'un $K$-point \`a stabilisateur $G$. Inversement, \'etant donn\'e un espace homog\`ene de $\sl_{n,K}$ muni d'un $K$-point, on obtient un $K$-groupe $G$ plong\'e dans $\sl_{n,K}$ en prenant le stabilisateur du point. Dans cette section on \'etablit une variante de cette \'equivalence dans le cadre des espaces homog\`enes sans point rationnel. Un tel espace nous donne toujours une gerbe qui correspond \`a sa classe de Springer (cf.~la section \ref{section esp hom}). Par ``plonger une $K$-gerbe dans $\sl_{n,K}$'', on veut justement \'evoquer la construction d'un espace homog\`ene de $\sl_{n,K}$ dont la classe de Springer soit la (classe d'\'equivalence de la) gerbe que l'on s'est donn\'ee.

On utilise ci-dessous les notations donn\'ees en section \ref{section H2 nonab}

\begin{pro} \label{prop compatibilite des liens avec sln}
Soit $\mathcal{M}$ une $K$-gerbe alg\'ebrique. Alors il existe un entier $n \in \n$ et un morphisme injectif de $K$-gerbes $\rho : \mathcal{M} \to \TORS(\sl_{n,K})$.
\end{pro}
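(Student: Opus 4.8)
Le plan est de traduire l'\'enonc\'e, via les \'equivalences de la section \ref{section 2-coh} et la relation de fonctorialit\'e de la section \ref{section fonct}, en un probl\`eme concret de prolongement de repr\'esentation. Notons $L_\mathcal{M} = (\bar H, \kappa)$ le lien (alg\'ebrique) de $\mathcal{M}$ et choisissons une extension
\[1 \to \bar H(\bar K) \to E \xrightarrow{\pi} \Gamma_K \to 1\]
repr\'esentant la classe $\eta_\mathcal{M} \in H^2(K, L_\mathcal{M})$. Comme $\sl_{n,K}$ est un $K$-groupe, la gerbe $\TORS(\sl_{n,K})$ est neutre et l'extension correspondante est le produit semi-direct $\sln(\bar K) \rtimes \Gamma_K$. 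Se donner un morphisme injectif $\rho : \mathcal{M} \to \TORS(\sl_{n,K})$ li\'e par un morphisme de liens $j : \bar H \to \sln$ revient alors, dans le langage des extensions, \`a se donner un morphisme d'extensions au-dessus de $\Gamma_K$ dont la fl\`eche de gauche est $j$ : cela \'equivaut \`a une application continue $\beta : E \to \sln(\bar K)$ v\'erifiant $\beta(e_1 e_2) = \beta(e_1) \cdot {}^{\pi(e_1)}\beta(e_2)$ et prolongeant $j$ sur $\bar H(\bar K)$, o\`u $\Gamma_K$ agit sur $\sln(\bar K)$ via $\pi$. Autrement dit, on cherche une repr\'esentation $\bar K$-semi-lin\'eaire de $E$, de dimension finie et de d\'eterminant trivial, se restreignant \`a une repr\'esentation alg\'ebrique \emph{fid\`ele} de $\bar H$ : l'injectivit\'e demand\'ee est exactement la fid\'elit\'e de $j$.

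Pour construire une telle repr\'esentation, je partirais du fait que $\mathcal{M}$, \'etant alg\'ebrique, est neutre sur une extension finie galoisienne $K'/K$, et y est li\'ee par une $K'$-forme $H'$ de $\bar H$. Le groupe lin\'eaire $H'$ admet une repr\'esentation fid\`ele de dimension finie $V$ sur $K'$. Posons $E_0 := \pi^{-1}(\Gamma_{K'})$, sous-groupe ouvert d'indice fini $[\Gamma_K : \Gamma_{K'}]$ dans $E$ : la neutralit\'e sur $K'$ fournit une section continue $s : \Gamma_{K'} \to E_0$, et la $K'$-structure de $V$ munit $\bar V := V \otimes_{K'} \bar K$ d'une action semi-lin\'eaire de $\Gamma_{K'}$ via $s$ ; combin\'ee \`a $j$ sur $\bar H(\bar K)$, on obtient une repr\'esentation $\bar K$-semi-lin\'eaire fid\`ele de $E_0$. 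On induit alors cette repr\'esentation de $E_0$ \`a $E$ : l'indice \'etant fini, on obtient une repr\'esentation $\bar K$-semi-lin\'eaire de $E$, continue et de dimension finie, dont la restriction \`a $\bar H(\bar K)$ est isomorphe \`a $\bigoplus_{\sigma} {}^\sigma V$ (somme sur $\Gamma_K/\Gamma_{K'}$), donc reste fid\`ele puisque chaque facteur l'est. Ceci fournit d\'ej\`a un morphisme injectif $\mathcal{M} \to \TORS(\gl_{n,K})$.

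Il reste \`a corriger le d\'eterminant pour atterrir dans $\sl_{n,K}$. Si $\bar V$ d\'esigne l'espace de la repr\'esentation pr\'ec\'edente, on le remplace par $\bar V \oplus (\det \bar V)^{-1}$ : la fonctorialit\'e du d\'eterminant assure que cette construction reste $\bar K$-semi-lin\'eaire et compatible \`a $E$, que son d\'eterminant est trivial, et que la restriction \`a $\bar H(\bar K)$ demeure fid\`ele. On obtient ainsi la repr\'esentation souhait\'ee, d'o\`u le morphisme injectif $\rho$ cherch\'e pour un certain $n$. L'obstacle principal sera l'\'etape d'induction : contrairement au cas d'un $K$-groupe, o\`u une repr\'esentation fid\`ele existe directement sur $K$, la non-neutralit\'e de $\mathcal{M}$ interdit de descendre $V$ \`a $K$, et c'est pr\'ecis\'ement le passage de $E_0$ \`a $E$ --- le seul endroit o\`u intervient r\'eellement le fait que l'on manipule une gerbe et non un groupe --- qui demande du soin, tant pour pr\'eserver la fid\'elit\'e du lien sous-jacent que pour garantir la continuit\'e de $\beta$ au sens des conventions de la section \ref{section H2 nonab} ; la correction du d\'eterminant n'est ensuite qu'une formalit\'e.
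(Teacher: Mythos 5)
Votre d\'emonstration est correcte et suit en substance la m\^eme strat\'egie que celle du texte : on neutralise $\mathcal{M}$ sur une extension finie galoisienne $K'/K$, on y choisit une repr\'esentation fid\`ele de la $K'$-forme $G'$ du lien, puis on redescend \`a $K$ par un proc\'ed\'e de restriction des scalaires. La seule diff\'erence est de langage : l\`a o\`u vous induisez la repr\'esentation semi-lin\'eaire de $E_0=\pi^{-1}(\Gamma_{K'})$ \`a $E$, le texte applique la restriction de Weil $R_{K'/K}$ \`a l'\'equivalence $\mathcal{M}_{K'}\simeq\TORS(G')$ pour obtenir un morphisme injectif $\mathcal{M}\to\TORS(R_{K'/K}(G'))$, puis plonge le $K$-groupe affine $R_{K'/K}(G')$ dans $\sl_{n,K}$ ; les deux constructions co\"incident (l'induction de $E_0$ \`a $E$ est l'avatar, c\^ot\'e extensions, de la restriction de Weil), et les points que vous signalez vous-m\^eme comme d\'elicats --- fid\'elit\'e, alg\'ebricit\'e et continuit\'e de la repr\'esentation induite --- sont pr\'ecis\'ement ceux que le formalisme des gerbes et la fonctorialit\'e de $R_{K'/K}$ r\`eglent automatiquement.
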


\begin{proof}
Soit $K'/K$ une extension finie galoisienne telle que $\mathcal{M}(K') \neq \emptyset$. Choisissons $m \in \mathcal{M}(K')$ et notons $G' := \underline{\textup{Aut}}(m)$ le $K'$-groupe lin\'eaire associ\'e. On a alors une \'equivalence de $K'$-gerbes $\varphi:\mathcal{M}_{K'} \xrightarrow{\sim} \underline{\mathrm{TORS}}(G')$ (cf.~\cite[III.2.5.1]{Giraud}).

Posons $H:= R_{K'/K}(G')$. La restriction \`a la Weil permet alors de d\'efinir un morphisme injectif de $K$-gerbes $\epsilon : \mathcal{M} \to \TORS(H)$ de la fa\c con suivante : soit $S$ un $K$-sch\'ema \'etale. Pour tout $m \in \mathcal{M}(S)$, on note $\epsilon(m) \in \TORS(H)(S)$ le $S$-torseur sous $H$ d\'efini par $R_{S_{K'}/S}(\varphi(m_{K'}))$ (par d\'efinition, $\varphi(m_{K'})$ est un $S_{K'}$-torseur sous $G'$). De m\^eme, si $\phi : m \to m'$ est un morphisme dans $\mathcal{M}(S)$, on d\'efinit le morphisme $\epsilon(\phi) : \epsilon(m) \to \epsilon(m')$ dans $\TORS(H)(S)$ comme le morphisme $R_{S_{K'}/S}( \phi_{K'} : m_{K'} \to m'_{K'})$.	

Pour finir, il existe un morphisme injectif de $K$-groupes affines $j : H\to \sl_{n,K}$ et on note 
\[\rho : \mathcal{M} \xrightarrow{\epsilon} \TORS(H) \xrightarrow{j} \TORS(\sl_{n,K}),\]
le morphisme compos\'e de $K$-gerbes.
\end{proof}

\begin{pro} \label{prop existence des espaces homogenes}
Le foncteur ``classe de Springer'' $\eta : \EspHom(K) \to \GerbesHom(K)$ d\'efini dans la section \ref{section esp hom} est une \'equivalence de cat\'egories.
\end{pro}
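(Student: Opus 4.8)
La strat\'egie est de construire un foncteur quasi-inverse $\Theta : \GerbesHom(K) \to \EspHom(K)$ et d'exhiber des isomorphismes naturels $\eta\circ\Theta\cong\id$ et $\Theta\circ\eta\cong\id$. Soit donc $(G,\cal M,\varphi)$ un objet de $\GerbesHom(K)$. Comme $\varphi$ est injectif, il est li\'e \`a un morphisme injectif de liens, de sorte que le $\bar K$-groupe $\bar H$ sous-jacent \`a $L_{\cal M}$ se plonge dans $\bar G$. On choisit une extension finie galoisienne $K'/K$ assez grande (restant finie galoisienne sur $K$) et un objet $m\in\cal M(K')$; on note $H':=\underline{\aut}(m)$, qui est un $K'$-groupe lin\'eaire lisse car $\cal M$ est alg\'ebrique, et $P':=\varphi(m)\in\TORS(G)(K')$, qui est un $G_{K'}$-torseur. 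L'injectivit\'e de $\varphi$ fournit un plongement de $H'$ dans le $K'$-groupe $\underline{\aut}(P')$ des automorphismes du torseur, lequel agit \`a gauche sur $P'$ en commutant \`a l'action \`a droite de $G$. Cette action \`a gauche de $H'$ \'etant libre, le quotient $X':= H'\backslash P'$ existe comme $K'$-vari\'et\'e quasi-projective et h\'erite d'une action \`a droite de $G_{K'}$ transitive sur les $\bar K$-points. G\'eom\'etriquement, $P'_{\bar K}\cong\bar G$ et $X'_{\bar K}\cong\bar H\backslash\bar G$, de stabilisateur $\bar H$ en accord avec $L_{\cal M}$.

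Il reste \`a descendre $X'$ de $K'$ \`a $K$, et c'est ici qu'intervient le point crucial. La propri\'et\'e de gerbe fournit, pour chaque $\sigma\in\gal(K'/K)$, un isomorphisme $\varphi_\sigma : {}^\sigma m\xrightarrow{\sim} m$ dans $\cal M(K')$, et donc, via $\varphi$ (morphisme de $K$-gerbes, compatible aux foncteurs image inverse), un isomorphisme de torseurs $\varphi(\varphi_\sigma) : {}^\sigma P'\xrightarrow{\sim} P'$, puis par passage au quotient un isomorphisme $\theta_\sigma : {}^\sigma X'\xrightarrow{\sim} X'$ de $G_{K'}$-espaces homog\`enes. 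Ces isomorphismes ne v\'erifient en g\'en\'eral la condition de cocycle qu'\`a un \'el\'ement $u_{\sigma,\tau}:=\varphi_{\sigma\tau}\circ{}^\sigma\varphi_\tau^{-1}\circ\varphi_\sigma^{-1}\in H'$ pr\`es (c'est pr\'ecis\'ement la classe $\eta_{\cal M}\in H^2(K,L_{\cal M})$), \emph{mais cet \'el\'ement agit par translation \`a gauche par $H'$ sur $P'$, donc trivialement sur le quotient $X'=H'\backslash P'$}. Les $(\theta_\sigma)$ v\'erifient donc, eux, la condition de cocycle et forment une donn\'ee de descente $G$-\'equivariante sur $X'$. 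Comme $X'$ est quasi-projective, la descente galoisienne (finie, donc effective) produit un espace homog\`ene $X$ de $G$ sur $K$, \`a stabilisateurs lisses. On pose $\Theta(G,\cal M,\varphi):=(G,X)$, construction que l'on rend fonctorielle, la fonctorialit\'e en un morphisme $u:G\to G'$ passant par l'image directe des torseurs le long de $u$.

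Les deux isomorphismes naturels se v\'erifient alors comme suit. Pour $\eta\circ\Theta\cong\id$, on construit une \'equivalence de gerbes $\cal M\xrightarrow{\sim}\cal M_X$ au-dessus de $\TORS(G)$ en envoyant un objet $m\in\cal M(A)$ sur le rel\`evement $(\varphi(m),\,\varphi(m)\to X_A)$, la seconde fl\`eche \'etant la projection canonique $\varphi(m)\to \underline{\aut}(m)\backslash\varphi(m)$ suivie de la descente; par construction cette \'equivalence est compatible \`a $\varphi$ et $\varphi_X$. Pour $\Theta\circ\eta\cong\id$, on part de $(G,X)$ et l'on applique $\Theta$ \`a $(G,\cal M_X,\varphi_X)$ : sur une extension o\`u $X$ acquiert un point (ce qui neutralise $\cal M_X$), le rel\`evement tautologique $m$ v\'erifie $\varphi_X(m)\cong G$ et $\underline{\aut}(m)\backslash\varphi_X(m)\cong\bar H\backslash\bar G\cong\bar X$ via l'application orbitale, et l'on v\'erifie que les donn\'ees de descente co\"incident, d'o\`u un isomorphisme canonique $\Theta(\eta(G,X))\cong(G,X)$. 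Ces deux isomorphismes \'etant compatibles aux morphismes des deux cat\'egories, $\eta$ est une \'equivalence.

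Le principal obstacle est l'\'etape de descente : il faut d'une part justifier rigoureusement que la classe $\eta_{\cal M}$ — l'obstruction \`a recoller les torseurs $P'$ en un seul $G$-torseur sur $K$ — dispara\^it apr\`es passage au quotient par $H'$, m\'ecanisme qui remplace, en l'absence de point rationnel, l'argument usuel o\`u l'on dispose directement d'un sous-groupe $H\subset G$ d\'efini sur $K$; et d'autre part s'assurer de l'effectivit\'e de la descente, ce qui repose sur la quasi-projectivit\'e de $\bar H\backslash\bar G$ et sur la r\'eduction \`a une extension finie. La v\'erification de la compatibilit\'e des isomorphismes naturels avec les foncteurs image inverse ${}^\sigma(-)$ et avec les morphismes, quoique routini\`ere, demandera de manipuler soigneusement les identifications de \cite[V]{Giraud}.
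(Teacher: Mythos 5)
Votre preuve est correcte et suit essentiellement la m\^eme strat\'egie que celle du texte : construction d'un quasi-inverse via le quotient $\bar H\backslash\bar G$ tordu par le cocycle de la gerbe, le point cl\'e \'etant dans les deux cas que l'obstruction $u_{\sigma,\tau}\in H'$ agit par translation \`a gauche et dispara\^it donc dans le quotient, puis v\'erification des deux m\^emes transformations naturelles. La seule diff\'erence est de pr\'esentation : le texte d\'efinit d'embl\'ee l'espace homog\`ene comme faisceau \'etale sur $K$ (les couples $(m,\varphi)$ avec $\varphi$ une trivialisation de $\rho(m)$, modulo \'equivalence), ce qui reporte tout le travail sur la repr\'esentabilit\'e, tandis que vous construisez la vari\'et\'e sur $K'$ et la descendez explicitement --- deux emballages du m\^eme argument.
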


\begin{proof}
Construisons un quasi-inverse explicite $\mu : \GerbesHom(K) \to \EspHom(K)$. Soit $E := (G, \mathcal{M}, \rho) \in \GerbesHom(K)$. On consid\`ere le foncteur $Y_E$ qui \`a tout morphisme \'etale $S \to \spec(K)$ associe l'ensemble $Y_E(S)$ des couples $(m, \varphi)$, avec $m \in \mathcal{M}(S)$ et $\varphi : \rho(m) \xrightarrow{\sim} G_S$ un morphisme dans $\TORS(G)(S)$. On dit que deux \'el\'ements $(m, \varphi)$ et $(m', \varphi')$ sont \'equivalents s'il existe un (iso)morphisme $\psi : m \to m'$ dans $\mathcal{M}(S)$ tel que $\varphi' \circ \rho(\psi) = \varphi$. On d\'efinit $X_E(S)$ comme le quotient de $Y_E(S)$ par cette relation d'\'equivalence et on note $\overline{(m,\varphi)}$ la classe d'\'equivalence de $\varphi$.  On d\'efinit ainsi un foncteur $X_E$ de la cat\'egorie des $K$-sch\'emas \'etale vers la cat\'egorie des ensembles. On v\'erifie facilement que $X_E$ est un faisceau \'etale, admettant des sections localement pour la topologie \'etale.

On d\'efinit une action de $G$ sur $X_E$ de la fa\c con suivante : pour tout $S$ \'etale sur $K$, pour tout $g \in G(S)$ et $\overline{(m, \varphi)} \in X_E(S)$, on note $\overline{(m, \varphi)} \cdot g := \overline{(m, g \circ \varphi)}$, o\`u $g : G_S \to G_S$ d\'esigne la translation \`a gauche par $g$.

On v\'erifie que $X_E$ est un espace homog\`ene de $G$ (comme faisceau \'etale). Il est clair que la construction de $X_E$ est fonctorielle en $E$.

Enfin, montrons que $X_E$ est repr\'esentable par une vari\'et\'e alg\'ebrique, i.e. que $(G,X_E) \in \EspHom(K)$. Choisissant $x = \overline{(m,\varphi)} \in X_E(K')$, pour une extension finie galoisienne $K'$ de $K$, on note $H' := \underline{\textup{Aut}}(m)$, qui n'est autre que le stabilisateur de $x$ dans $G_{K'}$. Alors $H'$ est un $K'$-groupe lin\'eaire lisse, et $\rho$ et $\varphi$ r\'ealisent $H'$ comme un sous-groupe alg\'ebrique (ferm\'e) de $G_{K'}$. Alors le morphisme naturel $G_{K'} \to X_{E,K'}$ donn\'e par l'action de $G$ sur $x$ induit un isomorphisme $H' \backslash G_{K'} \xrightarrow{\sim} X_{E,K'}$, ce qui assure que $X_{E,K'}$ est repr\'esentable par une $K'$-vari\'et\'e quasi-projective. En outre, le choix, pour tout $\sigma \in \Gamma_K$, d'un isomorphisme $\varphi_{\sigma} : {^\sigma m} \to m$ dans $\mathcal{M}(K'')$ (avec $K''$ extension finie de $K'$) permet de construire un $2$-cocycle $(f,u) \in Z^2(K, L)$, o\`u $L$ est le $K$-lien de $\mathcal{M}$, et une application continue $c : \Gamma_K \to G(\bar{K})$ telle que si $f' : \Gamma_K \to \saut(\bar{G})$ est le morphisme induit par l'action naturelle, on a $f_\sigma = \int(c_{\sigma}) \circ f'_\sigma$ et $u_{\sigma, \tau} = c_{\sigma \tau} f'_\sigma(c_\tau)^{-1} c_\sigma^{-1}$. 

On munit alors la $\bar K$-vari\'et\'e quasi-projective $\bar{X} := \bar{H'} \backslash \bar{G}$, point\'ee par la classe $x_0$ de $\bar{H'}$, de l'action de Galois suivante : pour tout $\sigma \in \Gamma_K$, on pose ${^\sigma (x_0 \cdot g)} := x_0 \cdot (c_\sigma f'_\sigma(g))$. On v\'erifie que cela d\'efinit bien une action du groupe de Galois de $K$, et la descente galoisienne (cf.~par exemple \cite[\S 6.2, Ex.~B]{BLR}) assure qu'il existe une $K$-forme $X$ de $\bar{X}$ munie d'une action naturelle de $G$ d\'efinie sur $K$. Enfin, on v\'erifie que le morphisme pr\'ec\'edent $\bar{X} \xrightarrow{\sim} \bar X_E$ est compatible avec les actions de $\Gamma_K$ des deux c\^ot\'es. Cela assure que $X_E$ est repr\'esentable (par $X$).

On a ainsi d\'efini un foncteur
\begin{align*}
\mu:\GerbesHom(K) &\to \EspHom(K)\\
E := (G, \mathcal{M}, \rho) &\mapsto (G,X_E).
\end{align*}
Montrons maintenant que $\eta$ et $\mu$ sont quasi-inverses.

Soit $(G,X) \in \EspHom(K)$ et notons $E_X := (G,\mathcal{M}_X,\rho_X) = \eta(X)$. Pour tout $K$-sch\'ema \'etale $S$, on dispose d'une application canonique $X_{E_X}(S) \to X(S)$ d\'efinie par $\overline{(m,\varphi)} \mapsto \pi_m(\varphi^{-1}(1_G))$, o\`u $\pi_m : m \to X_S$ est l'application naturelle. Par construction, cette application est une bijection $G(S)$-\'equivariante, fonctorielle en $S$ et $X$. D'o\`u une transformation naturelle $\mu \circ \eta \to \id$.

Soit $E=(G, \mathcal{M}, \rho) \in \GerbesHom(K)$ et notons $(G,X_E) := \mu(E)$. Pour tout $K$-sch\'ema \'etale $S$, on dispose d'un foncteur naturel $\mathcal{M}(S) \to \mathcal{M}_{X_E}(S)$ d\'efini sur les objets par $m \mapsto (\varpi_m : \underline{\textup{Isom}}_S(\rho(m),G_S) \to (X_E)_S)$, avec $\varpi_m(\varphi) := \overline{(m, \varphi)}$. On v\'erifie que ce foncteur induit une \'equivalence de $K$-gerbes $\mathcal{M} \to \mathcal{M}_{X_E}$ compatible aux morphismes de gerbes $\rho$ et $\rho_{X_E}$, fonctorielle en $E$. D'o\`u une transformation naturelle $\eta \circ \mu \to \id$.
\end{proof}

\begin{cor} \label{corollaire plongement de liens}
Soit $\mathcal{M}$ une $K$-gerbe alg\'ebrique. Alors il existe un entier $n$ et un $K$-espace homog\`ene $X$ de $\sl_{n,K}$ dont la gerbe de Springer $\mathcal{M}_X$ est isomorphe \`a $\mathcal{M}$.
\end{cor}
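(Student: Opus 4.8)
Le plan est de combiner directement les deux propositions qui pr\'ec\`edent, le corollaire \'etant une cons\'equence purement formelle de celles-ci. Je commencerais par appliquer la Proposition \ref{prop compatibilite des liens avec sln} \`a la $K$-gerbe alg\'ebrique $\cal M$ : elle fournit un entier $n \in \n$ et un morphisme injectif de $K$-gerbes $\rho : \cal M \to \TORS(\sl_{n,K})$. Le triplet $E := (\sl_{n,K}, \cal M, \rho)$ constitue alors un objet de la cat\'egorie $\GerbesHom(K)$, puisque $\sl_{n,K}$ est bien un $K$-groupe alg\'ebrique lin\'eaire lisse et $\rho$ un morphisme injectif de $K$-gerbes vers $\TORS(\sl_{n,K})$.

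Ensuite, je me servirais de la Proposition \ref{prop existence des espaces homogenes}, selon laquelle le foncteur ``classe de Springer'' $\eta : \EspHom(K) \to \GerbesHom(K)$ est une \'equivalence de cat\'egories. En appliquant le quasi-inverse $\mu$ construit dans sa preuve \`a l'objet $E$, on obtient un couple $\mu(E) = (\sl_{n,K}, X_E)$ dans $\EspHom(K)$, c'est-\`a-dire un $K$-espace homog\`ene $X := X_E$ du groupe $\sl_{n,K}$. Il reste \`a identifier sa gerbe de Springer. Pour cela j'invoquerais la transformation naturelle $\eta \circ \mu \to \id$, autrement dit le fait que $\mu$ soit un quasi-inverse de $\eta$ : elle fournit un isomorphisme dans $\GerbesHom(K)$ entre $\eta(\sl_{n,K}, X) = (\sl_{n,K}, \cal M_X, \rho_X)$ et $E = (\sl_{n,K}, \cal M, \rho)$. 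En n'en retenant que la composante ``gerbe'' (i.e.~en oubliant le morphisme vers $\TORS(\sl_{n,K})$), on en d\'eduit en particulier une \'equivalence de $K$-gerbes $\cal M_X \cong \cal M$, ce qui est exactement l'\'enonc\'e recherch\'e.

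Il n'y a ici aucun obstacle r\'eel : tout le travail substantiel a d\'ej\`a \'et\'e accompli en amont, d'une part dans la construction du plongement d'une gerbe alg\'ebrique dans $\TORS(\sl_{n,K})$ (via la restriction \`a la Weil et un plongement d'un $K$-groupe affine dans $\sl_{n,K}$, Prop.~\ref{prop compatibilite des liens avec sln}), d'autre part dans l'\'equivalence de cat\'egories entre espaces homog\`enes et gerbes munies d'un plongement dans un $\TORS(G)$ (Prop.~\ref{prop existence des espaces homogenes}). Le seul point m\'eritant une attention est de s'assurer que l'isomorphisme de gerbes issu de l'\'equivalence respecte bien les liens sous-jacents, mais cela est automatiquement garanti par la fonctorialit\'e de $\eta$ et de $\mu$ et par le fait que ce sont des foncteurs quasi-inverses.
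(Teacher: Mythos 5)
Votre démonstration est correcte et suit exactement la même démarche que celle du texte, qui se contente d'invoquer les propositions \ref{prop compatibilite des liens avec sln} et \ref{prop existence des espaces homogenes} ; vous ne faites qu'expliciter les détails (construction de l'objet $E=(\sl_{n,K},\mathcal{M},\rho)$ de $\GerbesHom(K)$, application du quasi-inverse $\mu$ et de la transformation naturelle $\eta\circ\mu\to\id$). Rien à redire.
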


\begin{proof}
Il suffit d'appliquer la proposition \ref{prop compatibilite des liens avec sln} et la proposition \ref{prop existence des espaces homogenes}.
\end{proof}

\subsection{Le lemme sans nom ni point rationnel}
Le lemme sans nom (cf. \cite[\S3.2]{ColliotSansucChili}) permet de montrer que, \'etant donn\'e un $K$-groupe alg\'ebrique $G$, tous les espaces homog\`enes $G\backslash\sl_n$ ou $G\backslash\gl_n$ (qui poss\`edent tous des points rationnels) sont $K$-stablement birationnels entre eux, ind\'ependamment du plongement de $G$ dans ces groupes. Ayant donn\'e un sens au plongement d'une gerbe dans $\sl_n$ ou $\gl_n$, on peut maintenant se poser la m\^eme question pour des espaces homog\`enes sans point rationnel, mais ayant la m\^eme gerbe associ\'ee. Le th\'eor\`eme suivant fournit les outils n\'ecessaires pour r\'epondre affirmativement \`a cette question, mais aussi pour les constructions g\'eom\'etriques de la preuve du th\'eor\`eme principal.

\begin{thm}\label{thm lemme sans nom}
Soient $G$ un $K$-groupe lin\'eaire lisse et $X$ un espace homog\`ene de $G$. Soient $N \subset G$ un sous-groupe distingu\'e et $(G',\mathcal{M}', \rho') \in \GerbesHom(K)$. On suppose qu'il existe un diagramme commutatif de $K$-gerbes
\[\xymatrix{
\mathcal{M}_X \ar[r]^{\varpi} \ar[d]^{\rho_X} & \mathcal{M}' \ar[d]^{\tau} \\
\TORS(G) \ar[r] & \TORS(G/N) \, ,
}\]
avec $\varpi$ surjectif et l'on note $\rho'_X:=\rho'\circ\varpi$.

Alors il existe un $K$-espace homog\`ene $X'$ de $(G/N \times G')$ tel que $\eta(X') = (G/N \times G',\mathcal{M}',\tau \times \rho')$, un $K$-espace homog\`ene $Y$ de $(G \times G')$ tel que $\eta(Y) = (G \times G',\mathcal{M}_X,\rho_X \times \rho'_X)$, et des morphismes naturels de $K$-espaces homog\`enes 
\[\xymatrix{
& Y \ar[rd]^q \ar[ld]_p & \\
X & & X' \, ,
}\]
de sorte que $Y$ est un $X$-torseur sous $G'$ et $Y$ est un $X'$-espace homog\`ene de $N_{X'}$, dont les fibres g\'eom\'etriques ont un stabilisateur isomorphe \`a $\ker \bar{\varpi} \subset \bar N$, o\`u $\bar{\varpi}$ est un morphisme de $\bar K$-groupes alg\'ebriques repr\'esentant le morphisme de liens $L(\mathcal{M}_X) \to L(\mathcal{M}')$ liant $\varpi$.
\end{thm}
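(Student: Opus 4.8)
The plan is to deduce everything formally from the equivalence of categories $\eta \colon \EspHom(K) \to \GerbesHom(K)$ of Proposition~\ref{prop existence des espaces homogenes} and its explicit quasi-inverse $\mu$, so that the construction of $X'$, $Y$, $p$ and $q$ becomes a manipulation in $\GerbesHom(K)$, and then to check the two geometric assertions after base change to $\bar K$. First I would observe that the two triples appearing in the statement are genuine objects of $\GerbesHom(K)$. The morphism $\tau \times \rho' \colon \mathcal{M}' \to \TORS(G/N) \times \TORS(G') = \TORS(G/N \times G')$ is injective because its composite with the second projection is $\rho'$, which is injective by hypothesis; likewise $\rho_X \times \rho'_X \colon \mathcal{M}_X \to \TORS(G \times G')$ is injective since its first component $\rho_X$ is. I would then set $X' := \mu(G/N \times G', \mathcal{M}', \tau \times \rho')$ and $Y := \mu(G \times G', \mathcal{M}_X, \rho_X \times \rho'_X)$, so that by the equivalence $\eta(X')$ and $\eta(Y)$ are canonically the prescribed triples.

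Next I would realise $p$ and $q$ as images under $\mu$ of morphisms in $\GerbesHom(K)$. The pair $(\mathrm{pr}_1 \colon G \times G' \to G,\ \id_{\mathcal{M}_X})$ is a morphism $(G \times G', \mathcal{M}_X, \rho_X \times \rho'_X) \to (G, \mathcal{M}_X, \rho_X) = \eta(X)$, since pushing $\rho_X \times \rho'_X$ forward along $\mathrm{pr}_1$ gives back $\rho_X$; applying $\mu$ produces $p \colon Y \to X$. Similarly, the quotient $G \times G' \to G/N \times G'$ together with $\varpi \colon \mathcal{M}_X \to \mathcal{M}'$ defines a morphism to $(G/N \times G', \mathcal{M}', \tau \times \rho')$: the required compatibility is precisely the commutative square of the hypothesis on the $G$-factor, which yields $(\pi_N)_* \circ \rho_X = \tau \circ \varpi$ (with $\pi_N \colon G \to G/N$), combined with $\rho'_X = \rho' \circ \varpi$ on the $G'$-factor. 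Applying $\mu$ gives $q \colon Y \to X'$, equivariant for $G \times G' \to G/N \times G'$.

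Finally I would verify the structural claims over $\bar K$, where $\bar Y = \bar H \backslash (\bar G \times \bar G')$ with $\bar H = \underline{\aut}(m)$ embedded by $(\bar\rho_X, \bar\rho'_X)$ and $\bar\rho'_X = \bar\rho' \circ \bar\varpi$. For $p$, a short computation shows the fibre over the base point is a single $\{1\} \times \bar G'$-orbit on which the action is \emph{free}, the freeness coming exactly from the injectivity of $\bar\rho_X$; since being a torsor may be checked after faithfully flat base change, $p$ is a $G'$-torsor over $X$. For $q$, the fibres are stable under $N = N \times \{1\}$, and using the \emph{surjectivity} of $\bar\varpi$ with the injectivity of $\bar\rho'$ one checks that $N$ acts transitively on each fibre. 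The hypothesis square forces $\bar\rho_X(\ker \bar\varpi) \subset \bar N$, and the stabiliser of the base point computes to $\bar\rho_X(\ker \bar\varpi) \cong \ker \bar\varpi$ (again via injectivity of $\bar\rho_X$ and $\bar\rho'$), giving the $N_{X'}$-homogeneous structure with the announced geometric stabiliser.

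The hard part, I expect, is not the $\bar K$-level computations but ensuring the compatibility of all the descent data: that $\mu$ really turns the two formal morphisms of $\GerbesHom(K)$ into $K$-morphisms of homogeneous spaces with the correct equivariance, and that the geometric torsor and $N$-homogeneous structures descend to genuine structures over the (non-split) bases $X$ and $X'$ over $K$. This is where the functoriality of the equivalence in Proposition~\ref{prop existence des espaces homogenes} does the essential work, letting one avoid any direct cocycle bookkeeping.
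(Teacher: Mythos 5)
Your proposal is correct and follows essentially the same route as the paper: both realise $(G\times G',\mathcal{M}_X,\rho_X\times\rho'_X)$ and $(G/N\times G',\mathcal{M}',\tau\times\rho')$ as objects of $\GerbesHom(K)$ fitting into a commutative diagram of gerbes, apply the equivalence of Proposition~\ref{prop existence des espaces homogenes} to obtain $Y$, $X'$, $p$ and $q$, and check the torsor and $N$-homogeneous structures after base change to $\bar K$ where the gerbes are trivialised. Your write-up is in fact somewhat more detailed than the paper's (which condenses the $\bar K$-level verifications into one sentence), but there is no difference of substance.
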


\begin{rem}
On appliquera souvent le th\'eor\`eme au cas $G'=\sl_{n,K}$, en utilisant le morphisme $\rho'$ fourni par la proposition \ref{prop compatibilite des liens avec sln}.
\end{rem}

\begin{proof}
Les hypoth\`eses assurent l'existence d'un diagramme commutatif de $K$-gerbes :
\[\xymatrix{
& \mathcal{M}_X \ar@{->>}[rd]^{\varpi} \ar@{^{(}->}[d]^{\rho \times \rho'_X} \ar@{=}[ld] & \\
\mathcal{M}_X \ar@{^{(}->}[d]_{\rho_X} & \TORS(G \times G') \ar@{->>}[dl]^{p_1} \ar@{->>}[rd]_{q_1} & \mathcal{M}' \ar@{^{(}->}[d]^{\tau \times \rho'} \\
\TORS(G) & & \TORS(G/N \times G') \, .
}\]
On applique alors la proposition \ref{prop existence des espaces homogenes}, qui fournit un diagramme de $K$-espaces homog\`enes
\[
\xymatrix@R=1em{
& Y \ar[rd]^q \ar[ld]_p & \\
X & & X' \, ,
}
\]
compatible avec les morphismes naturels de $K$-groupes alg\'ebriques $p_1:G\times G'\to G$ et $q_1:G\times G'\to G/N\times G$. Les propri\'et\'es de $p$ et $q$ se d\'eduisent par changement de base de $K$ \`a $\bar K$, o\'u les gerbes sont trivialis\'ees.
\end{proof}

On obtient une premi\`ere application int\'eressante en posant $G'=\sl_{n,K}$ :

\begin{cor}[Le lemme sans nom ni point rationnel]\label{corollaire lemme sans nom}
Soit $G$ un $K$-groupe alg\'ebrique sp\'ecial $K$-rationnel et soit $X$ un espace homog\`ene de $G$ de gerbe de Springer $\rho_X : \mathcal{M}_X \to \TORS(G)$. Alors tout espace homog\`ene $X'$ de $\sl_{n,K}$ dont la gerbe de Springer $\mathcal{M}_{X'}$ est isomorphe \`a $\mathcal{M}_X$ (donn\'e par exemple par le corollaire \ref{corollaire plongement de liens}), est $K$-stablement birationnel \`a $X$.

En particulier, deux espaces homog\`enes de $\sl_{n,K}$, dont les gerbes associ\'ees sont isomorphes, sont $K$-stablement birationnels.
\end{cor}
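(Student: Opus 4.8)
The plan is to read the corollary off Theorem~\ref{thm lemme sans nom}, applied in the degenerate case $N = G$ (so that $G/N$ is trivial), with $G' = \sl_{n,K}$. Writing $\mathcal{M} := \mathcal{M}_X$, I would fix the given isomorphism of $K$-gerbes $\varpi : \mathcal{M}_X \xrightarrow{\sim} \mathcal{M}_{X'}$, which exhibits $(\sl_{n,K}, \mathcal{M}_{X'}, \rho_{X'})$ as an object of $\GerbesHom(K)$. Since $G/N$ is trivial, $\TORS(G/N)$ is the terminal (neutral) gerbe, the square required by the theorem commutes for trivial reasons, $\varpi$ is surjective because it is an isomorphism, and the underlying lien morphism $\bar\varpi$ satisfies $\ker\bar\varpi = 1$.

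Feeding these data into Theorem~\ref{thm lemme sans nom} with $(G', \mathcal{M}', \rho') = (\sl_{n,K}, \mathcal{M}_{X'}, \rho_{X'})$ produces a $K$-homogeneous space $Y$ of $G \times \sl_{n,K}$ and morphisms $p : Y \to X$ and $q : Y \to X''$, where $\eta(X'') = (\sl_{n,K}, \mathcal{M}_{X'}, \rho_{X'})$; by the equivalence of Proposition~\ref{prop existence des espaces homogenes}, $X''$ is canonically identified with the given $X'$. The theorem asserts that $p$ is a torsor under $\sl_{n,K}$ and that $q$ is a torsor under $N_{X'}$ whose geometric fibres have trivial stabilizer $\ker\bar\varpi = 1$; crucially, since $N = G$ and $G/N$ is trivial there is no conjugation twisting of $N$, so $N_{X'}$ is the constant group $G$ and $q$ is an honest $G$-torsor. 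I would then conclude on each side by specialness and rationality: as $\sl_{n,K}$ is special, $p$ is Zariski-locally trivial, so $Y$ is $K$-birational to $X \times \sl_{n,K}$ and hence, $\sl_{n,K}$ being $K$-rational, $K$-stably birational to $X$; symmetrically, $G$ being special and $K$-rational, $q$ makes $Y$ $K$-stably birational to $X'$. Concatenating shows that $X$ and $X'$ are $K$-stably birational, and the last assertion follows by taking $G$ to be the special linear group of which the first homogeneous space is a quotient (it is special and $K$-rational) and letting the second homogeneous space play the role of $X'$.

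The only genuinely delicate point I anticipate is the verification that $N_{X'}$ is the constant group $G$, and not a possibly non-special inner twist of it: this is precisely where the choice $N = G$, $G/N$ trivial, is essential, because it guarantees that \emph{both} $p$ and $q$ are torsors under constant special $K$-rational groups, so that Zariski-local triviality and rationality apply verbatim on both sides. Concretely I would check this after base change to $\bar K$, where $\bar Y = \bar H \backslash (\bar G \times \overline{\sl_n})$ and the right-translation action of $G \times 1$ on $Y$ — defined over $K$ and untwisted — acts simply transitively on the fibres of $q$.
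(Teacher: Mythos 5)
Your proof is correct and follows essentially the same route as the paper: apply Theorem~\ref{thm lemme sans nom} with $N=G$, $G'=\sl_{n,K}$ and $\varpi$ the given isomorphism, obtaining a span $X \leftarrow Y \rightarrow X'$ in which $p$ and $q$ are torsors under the special $K$-rational groups $\sl_{n,K}$ and $G$ respectively, whence stable birationality on both sides. Your extra verification that $N_{X'}$ is the constant (untwisted) group $G$ is a sound precaution, consistent with the paper's remark that the properties of $p$ and $q$ are checked after base change to $\bar K$.
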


\begin{proof}
Il suffit d'appliquer le th\'eor\`eme \ref{thm lemme sans nom} \`a $\mathcal{M}' := \mathcal{M}_{X'}$, $\varpi : \mathcal{M}_X \to \mathcal{M}_{X'}$ un isomorphisme, $\rho' := \rho_{X'} : \mathcal{M}_{X'} \to \TORS(\sl_{n,K})$, $N=G$. On obtient alors le diagramme suivant d'espaces homog\`enes ($Y$ est un espace homog\`ene de $G \times \sl_{n,K}$) :
\[\xymatrix{
& Y \ar[rd]^q \ar[ld]_p & \\
X & & X' \, ,
}\]
o\`u $p$ et $q$ sont des torseurs sous $\sl_{n,K}$ et $G$ respectivement. Puisque tous les deux sont des $K$-groupes sp\'eciaux $K$-rationnels, on trouve que tant $X$ que $X'$ sont $K$-stablement birationnels \`a $Y$, ce qui conclut.
\end{proof}

\begin{rems}
{\bf 1.} Les \'enonc\'es des corollaires \ref{corollaire plongement de liens} et \ref{corollaire lemme sans nom} sont valables aussi en rempla\c cant $\sl_{n,K}$ par $\gl_{n,K}$.\\
{\bf 2.} La proposition \ref{prop compatibilite des liens avec sln} et le corollaire \ref{corollaire plongement de liens} permettent d'associer \`a toute $K$-gerbe alg\'ebrique une classe d'\'equivalence birationnelle stable de $K$-espaces homog\`enes de groupes sp\'eciaux $K$-rationnels. De cette fa\c con, toute propri\'et\'e de ces espaces homog\`enes invariante par \'equivalence birationnelle stable peut \^etre vue comme une propri\'et\'e des gerbes alg\'ebriques.
\end{rems}

\section{``L'action d'une gerbe sur un tore''}\label{section ono}
Dans \cite{GLA-BMWA}, on a donn\'e une petite g\'en\'eralisation du Lemme d'Ono au cadre des tores avec une $K$-action d'un $K$-groupe fini et lisse. Cela \'etait suffisant pour traiter l'approximation faible pour les espaces homog\`enes, mais pour traiter le principe de Hasse, il faut g\'en\'eraliser encore un peu cette notion.

Soit donc $K$ un corps parfait et $\mathcal{M}$ une $K$-gerbe alg\'ebrique, de lien $L$. On note $\bar G$ le $\bar K$-groupe alg\'ebrique sous-jacent \`a $L$, et $E$ une extension de $\Gamma_K$ par $\bar G (\bar K)$ associ\'ee \`a $\mathcal{M}$. Consid\'erons le morphisme surjectif $\bar G\to\bar G^\f$ (cf.~les notations en d\'ebut de section \ref{section prel}). Puisque $\bar G^\circ$ est un sous-groupe caract\'eristique, ce morphisme induit un $K$-lien naturel $L^\f$ sur $\bar G^\f$ et un morphisme de liens surjectif $\pi^\f:L\to L^\f$. On dispose alors de la gerbe $\mathcal{M}^\f$ comme le pouss\'e en avant (cf.~\cite[IV.2.3.18]{Giraud}) de $\mathcal{M}$ par $\pi^\f : L \to L^\f$. Elle correspond \`a l'extension $E^\f$ de $\Gamma_K$ par $\bar G^\f(\bar K)$ (qui est de plus un groupe profini) obtenue \`a partir de $E$. On fixe ces donn\'ees dans toute cette section.

La notion d'``action d'une gerbe sur un tore'' sera r\'ealis\'ee par l'action de l'extension correspondante selon la d\'efinition suivante:

\begin{defi}\label{defi eta-tore}
On d\'efinit un \emph{$E$-tore} comme la donn\'ee d'un $\bar K$-tore $\bar T$ et d'un homomorphisme continu $\phi:E^\f\to \aut_{\bar K\textup{-gr}}(\bar T)$.

Un morphisme de $E$-tores, ou \emph{$E$-morphisme}, est un morphisme de $\bar K$-tores compatible avec les actions respectives de $E^\f$. En particulier, on appelera \emph{$E$-isog\'enie} un $E$-morphisme surjectif et \`a noyau fini.
\end{defi}

L'extension $E^\f$ correspond au quotient de $E$ par le sous-groupe distingu\'e $\bar G^\circ(\bar K)$. De la d\'efinition ci-dessus on obtient une action naturelle de $E$ sur $\bar T$. D'autre part, on obtient aussi tout naturellement une action de $E$ (via $E^\f$) sur le module de caract\`eres $\hat{\bar T}=\hom_{\bar K}(\bar T,\gm)$ et il est facile de voir qu'une telle action d\'efinit en retour une structure de $E$-tore sur $\bar T$. On obtient ainsi un foncteur contravariant qui \`a un $E$-tore associe son module des caract\`eres.

On dira qu'un $E$-tore est \emph{trivial} si l'action de $E^\f$ sur $\hat{\bar T}$ l'est et \emph{quasi-trivial} si cette action fait de $\hat{\bar T}$ un $E^\f$-module de permutation (notons que tant l'action sur $\hat{\bar T}$ que celle sur $\bar T$ se factorisent par un quotient fini de $E^\f$ par continuit\'e).

On voit facilement aussi que les faits classiques sur les isog\'enies de tores restent valables dans ce contexte. Notamment, on peut dire que deux $E$-tores $\bar T$ et $\bar T'$ sont \emph{$E$-isog\`enes} s'il existe une $E$-isog\'enie de l'un sur l'autre. L'existence d'une telle isog\'enie implique en effet imm\'ediatement celle d'une $E$-isog\'enie dans l'autre sens.\\

Notons qu'il y a une fa\c con naturelle d'\'etendre le morphisme $\phi:E^\f\to \aut_{\bar K}(\bar T)$ en un morphisme continu $\phi':E^\f\to \saut(\bar T)$. En effet, le groupe $\saut_{\bar K}(\bar T)$ est canoniquement isomorphe au produit direct $\aut_{\bar K}(\bar T)\times\Gamma_K$ via la $K$-forme d\'eploy\'ee de $\bar T$. Il suffit alors de coupler $\phi$ avec la projection naturelle $E^\f\to\Gamma_K$ pour obtenir le morphisme $\phi'$ voulu. Ceci nous dit qu'on aurait pu d\'efinir un $E$-tore de fa\c con \'equivalente comme la donn\'ee d'un $\bar K$-tore $\bar T$ et d'un morphisme continu $\phi':E^\f\to \saut(\bar T)$ s'ins\'erant dans le carr\'e commutatif suivant :
\[\xymatrix{
E^\f \ar@{->>}[r] \ar[d]_{\phi'} & \Gamma_K \ar@{=}[d] \\
\saut(\bar T) \ar@{->>}[r] & \Gamma_K.
}\]
Au vu de la discussion pr\'ec\'edente, cette nouvelle d\'efinition est essentiellement \'equivalente \`a la pr\'ec\'edente, et se pr\^ete plus facilement \`a des g\'en\'eralisations non commutatives.

\begin{rem}
Ceci est une g\'en\'eralisation de la notion de $K$-action d'un $K$-groupe alg\'ebrique $G$ sur un $K$-tore $T$. En effet, une telle action se factorise toujours par le quotient $G^\f$ de $G$ et celle-ci donne de fa\c con naturelle un $E_G$-tore, o\`u $E_G$ d\'esigne l'extension naturelle $G(\bar{K}) \rtimes \Gamma_K$. L'action de $G^\f$ correspond \`a un morphisme $G^\f(\bar K)\to\aut_{\bar K}(\bar T)$, tandis que la structure de $K$-groupe de $T$ donne un morphisme continu $\Gamma_K\to\aut_{\bar K}(\bar T)$. Le fait que l'action de $G^\f$ soit d\'efinie sur $K$ permet alors d'\'etendre naturellement ces deux fl\`eches en un morphisme $G^\f(\bar K)\rtimes\Gamma_K\to\aut_{\bar K}(\bar T)$, donnant ainsi un $E_{G^\f}$-tore et donc un $E_G$-tore. On g\'en\'eralise ainsi ce qui a \'et\'e fait dans \cite{GLA-BMWA} autour du lemme d'Ono pour traiter les questions d'approximation faible.
\end{rem}

Le lemme suivant est une cons\'equence imm\'ediate de \cite[Lem.~9]{GLA-BMWA} (r\'esultat sur les modules de type fini sans torsion, d\^u \`a Ono) et des paragraphes qui pr\'ec\`edent.

\begin{lem}\label{lemme Ono}
Soient $K$ un corps parfait, $\mathcal{M}$ une $K$-gerbe alg\'ebrique correspondant \`a une extension $E$. Soit $(\bar T,\phi)$ un $E$-tore et soit $E'$ le sous-groupe ouvert de $E^\f$ correspondant au noyau de $\phi$. Alors il existe des $E$-tores quasi-triviaux $(\bar P,\phi_P)$ et $(\bar Q,\phi_Q)$ et un entier $r\geq 1$ tels que :
\begin{itemize}
\item les morphismes $\phi_P,\phi_Q$ sont triviaux sur $E'$ ;
\item le tore $\bar T^r\times \bar P$ est $E$-isog\`ene \`a $\bar Q$.
\end{itemize}
\end{lem}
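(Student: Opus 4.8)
The plan is to reduce the statement to a purely module-theoretic assertion over a finite group, invoke Ono's lemma in the form \cite[Lem.~9]{GLA-BMWA}, and then transport the conclusion back through the character functor. First I would exploit continuity to pass to a finite group: since $\aut_{\bar K}(\bar T)$ is discrete (it is isomorphic to $\gl_d(\z)$ with $d=\dim\bar T$) and $\phi$ is continuous, the subgroup $E'=\ker\phi$ is open of finite index in $E^\f$. Writing $\Pi:=E^\f/E'$, which is finite, the action of $E^\f$ on $\bar T$ and the induced action on the character module $\hat{\bar T}=\hom_{\bar K}(\bar T,\gm)$ both factor through $\Pi$, exactly as noted just after Definition~\ref{defi eta-tore}. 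Thus $M:=\hat{\bar T}$ is a finitely generated torsion-free $\z[\Pi]$-module, i.e.\ a $\z[\Pi]$-lattice.

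Next I would apply the module-theoretic input. Ono's lemma \cite[Lem.~9]{GLA-BMWA} produces permutation $\z[\Pi]$-lattices, which I name $\hat{\bar P}$ and $\hat{\bar Q}$, together with an integer $r\ge 1$ and a $\Pi$-equivariant injection $M^r\oplus\hat{\bar P}\hookrightarrow\hat{\bar Q}$ with finite cokernel. At the rational level this is simply the fact that, by Artin's induction theorem, some multiple of $M\otimes_\z\q$ becomes, after adding a permutation module, isomorphic to a permutation $\q[\Pi]$-module; one then chooses integral structures and clears denominators to obtain the injection with finite cokernel.

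Finally I would translate back through the contravariant character functor described before the lemma. The permutation $\Pi$-lattices $\hat{\bar P},\hat{\bar Q}$ correspond to quasi-trivial $E$-tores $(\bar P,\phi_P)$ and $(\bar Q,\phi_Q)$; since the $\Pi$-actions factor through $E^\f/E'$, the structure morphisms $\phi_P,\phi_Q$ are trivial on $E'$, as required. The injection $M^r\oplus\hat{\bar P}\hookrightarrow\hat{\bar Q}$ of character lattices, being injective with finite cokernel, dualizes to a surjective morphism of $\bar K$-tores with finite kernel $\bar Q\to\bar T^r\times\bar P$; as every arrow in sight is $\Pi$-equivariant, hence $E^\f$- and $E$-equivariant, this is an $E$-isogeny in the sense of Definition~\ref{defi eta-tore}. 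That gives the desired conclusion.

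I do not expect a serious obstacle here, since the entire arithmetic content is already contained in Ono's lemma on lattices; the only points demanding care are bookkeeping ones. Specifically, I would verify that the $\Pi$-equivariant maps output by the module statement genuinely define $E$-morphisms of $E$-tores, and that ``permutation $\Pi$-module'' matches precisely the definition of a quasi-trivial $E$-tore whose action is trivial on $E'$. Both verifications are immediate from the dictionary between $E$-tores and $\z[\Pi]$-lattices set up in the paragraphs preceding the lemma, which is exactly why the result is an immediate consequence of \cite[Lem.~9]{GLA-BMWA}.
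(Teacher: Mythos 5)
Your proposal is correct and follows exactly the route the paper intends: the paper gives no detailed argument, declaring the lemma an immediate consequence of Ono's result on lattices \cite[Lem.~9]{GLA-BMWA} together with the dictionary between $E$-tores and character modules set up in the preceding paragraphs, and your write-up simply makes that reduction explicit (openness of $\ker\phi$, passage to the finite quotient $\Pi=E^\f/E'$, application of Ono's lemma to the $\z[\Pi]$-lattice $\hat{\bar T}$, and dualization). The only point worth noting is the direction of the resulting isogeny ($\bar Q\to\bar T^r\times\bar P$ versus the reverse), which is immaterial since the paper observes that the existence of an $E$-isog\'enie in one direction implies one in the other.
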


Ce r\'esultat sera utilis\'e dans la d\'emonstration du th\'eor\`eme principal. En effet, \'etant donn\'ee une $K$-gerbe alg\'ebrique $\mathcal{M}$ associ\'ee \`a une extension $E$ de $\Gamma_K$ par $\bar G (\bar K)$, on construit un $E$-tore naturel de la fa\c con suivante.\\

Consid\'erons le morphisme surjectif $\bar G\to\bar G^\torf$. Puisque $\bar G^\ssu$ est aussi un sous-groupe caract\'eristique, on obtient un $K$-lien naturel $L^\torf$ sur $\bar G^\torf$ et un morphisme de liens surjectif $\pi^\torf:L\to L^\torf$ qui factorise clairement $\pi^\f$, ce qui nous permet de d\'efinir la gerbe $\mathcal{M}^\torf$ et l'extension $E^\torf$.

Le tore $\bar G^\tor$ admet alors naturellement une structure de $E$-tore. En effet, on dispose de l'extension $E^\torf$ :
\[1\to \bar G^\torf(\bar K)\to E^\torf\to \Gamma_K \to 1 \, .\]
Puisque $\bar G^\tor$ est caract\'eristique dans $\bar G^\torf$, on sait que le sous-groupe $\bar G^\tor(\bar K)$ est distingu\'e dans $E^\torf$. On en d\'eduit la suite exacte
\[1\to \bar G^\tor(\bar K) \to E^\torf\to E^\f\to 1.\]
Le groupe $E^\f$ agit alors de fa\c con naturelle sur $\bar G^\tor(\bar K)$ et il est facile de voir que cette action se rel\`eve en un morphisme continu vers $\saut(\bar G^\tor)$, ce qui munit $\bar G^\tor$ d'une structure de $E$-tore.\\

Notons pour finir que cette construction est fonctorielle en $E$ et en $K$.

\section{\'Enonc\'e et d\'emonstration du th\'eor\`eme principal}\label{section thm principal}
Soit $k$ un corps de nombres. Rappelons que l'on s'int\'eresse dans cet article \`a des propri\'et\'es arithm\'etiques comme \eqref{BMPH} (cf.~la section \ref{section intro}) pour les espaces homog\`enes des groupes lin\'eaires. Comme il a \'et\'e dit dans l'introduction, il nous faut cependant un minimum de propri\'et\'es d'approximation pour appliquer notre r\'esultat principal. On d\'efinit alors l'\emph{approximation r\'eelle} comme la propri\'et\'e
\begin{equation}\label{AR}
X(k)\neq\emptyset\,\Rightarrow\, \overline{X(k)}=\prod_{v\in\infty_k} X(k_v),\tag{AR}
\end{equation}
o\`u $\infty_k$ repr\'esente l'ensemble de places r\'eelles de $k$. C'est le couple \eqref{BMPH}+\eqref{AR} qu'il nous faut consid\'erer, afin que la propri\'et\'e suivante soit v\'erifi\'ee (nous remercions Olivier Wittenberg pour sa suggestion de pr\'esenter ainsi le r\'esultat) :

\begin{defi}
Soit $K$ un corps de caract\'eristique 0 et soit $P$ une propri\'et\'e des $K$-vari\'et\'es. On dit que $P$ est une \emph{bonne propri\'et\'e} si elle v\'erifie, pour $X,Y$  birationnelles \`a des $K$-espaces homog\`enes:
\begin{itemize}
\item[(i)] si $X$ et $Y$ sont stablement birationnelles, alors $P(X)\Leftrightarrow P(Y)$ ;
\item[(ii)] si $X\to Y$ est un morphisme admettant une section, alors $P(X)\Rightarrow P(Y)$ ;
\item[(iii)] si $X\to Y$ est un morphisme surjectif dont toute fibre est un espace homog\`ene d'un groupe $G=G^\ssu$ avec $G^\ss$ simplement connexe, \`a stabilisateur g\'eom\'etrique $\bar H = \bar H^\ssu$, alors $P(Y)\Rightarrow P(X)$.
\end{itemize}
\end{defi}

La justification de cette d\'efinition vient du fait que la preuve du r\'esultat principal de \cite{GLA-BMWA} concernant \eqref{BMAF}, utilise les trois propri\'et\'es ci-dessus (qui sont v\'erifi\'ees pour \eqref{BMAF}, cf.~la proposition \ref{prop P} ci-dessous). Or, la troisi\`eme n'est pas v\'erifi\'ee pour \eqref{BMPH}, ce qui nous a amen\'es \`a rajouter \eqref{AR} dans nos hypoth\`eses. Le r\'esultat principal de ce texte est le suivant, et sa preuve est purement g\'eom\'etrique :

\begin{thm}\label{main thm}
Soit $K$ un corps de caract\'eristique 0, soit $P$ une bonne propri\'et\'e.

Soit $G$ un $K$-groupe lin\'eaire connexe, $X$ un espace homog\`ene de $G$ et $x\in X(\bar K)$ un point \`a stabilisateur g\'eom\'etrique $\bar H$. Alors, il existe un $\bar K$-groupe fini $\bar F$, extension de $\bar H^\f$ par un $\bar K$-groupe ab\'elien, un $K$-espace homog\`ene $X'$ de $\sl_{n,K}$ et $x' \in X'(\bar K)$ \`a stabilisateur g\'eom\'etrique $\bar F$ tels que $P(X')\Rightarrow P(X)$.

En particulier, si tout espace homog\`ene de $\sl_{n,K}$ \`a stabilisateurs finis v\'erifie $P$, il en va de m\^eme pour tout espace homog\`ene d'un groupe lin\'eaire connexe.
\end{thm}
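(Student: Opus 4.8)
The plan is to exhibit the reduction as a short chain of fibrations linking $X$ to a homogeneous space $X'$ of $\sln$ with finite stabilizer, invoking at each link one of the three clauses of the definition of a good property so as to propagate the implication $P(X')\Rightarrow P(X)$. Everything takes place at the level of the Springer gerbe, via the equivalence $\eta$ of Proposition~\ref{prop existence des espaces homogenes}, and the sole geometric engine is Theorem~\ref{thm lemme sans nom}. I would always run it with auxiliary factor $G'=\sln$, so that the horizontal map $p\colon Y\to X$ it produces is a torsor under $\sln$; since $\sln$ is special and $K$-rational, such a $p$ makes $Y$ stably $K$-birational to $X$, whence $P(Y)\Leftrightarrow P(X)$ by condition (i). The vertical map $q\colon Y\to X'$ will be engineered, step by step, either to be a torsor under a special $K$-rational group (again condition (i)) or to have fibers satisfying \emph{exactly} the hypotheses of condition (iii).

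First I would reduce to the case where the ambient group is special and $K$-rational with simply connected semisimple part: a Levi decomposition strips off the unipotent radical through affine-space bundles (handled by (i) and (ii)), and a $z$-extension of the reductive part leaves the variety $X$ untouched while enlarging the geometric stabilizer only by a central quasi-trivial torus, so it changes neither $P(X)$ nor the component group $\bar H^\f$. After this, every torsor under the ambient group is itself a stable birational equivalence. The heart of the argument is then to kill the connected part $\bar H^\ssu$ of the stabilizer. Choosing a representation of $\bar H$ faithful on $\bar H^\ssu$ together with an embedding of the pushforward gerbe $\mathcal{M}_X^{\torf}$ (along $\bar H\to\bar H^\torf$) into $\TORS(\sl_{n',K})$ furnished by Proposition~\ref{prop compatibilite des liens avec sln}, I would transfer (cleanly, by a zigzag of torsors under special $K$-rational groups) to a homogeneous space of a product $\sl_{m,K}\times\sl_{n',K}$ in which the stabilizer embeds with $\bar H\cap(\sl_{m}\times 1)=\bar H^\ssu$. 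Here $N:=\sl_{m}\times 1$ is a distinguished subgroup with $N=N^\ssu$ and $N^\ss$ simply connected, and applying Theorem~\ref{thm lemme sans nom} to this $N$ and to the surjection $\varpi\colon\mathcal{M}_X\to\mathcal{M}_X^{\torf}$ yields a $q$ whose geometric fibers are homogeneous spaces of $\sl_{m}$ with stabilizer $\ker\bar\varpi=\bar H^\ssu$: precisely the situation of condition (iii), giving $P(X'')\Rightarrow P(X)$ with $X''$ of stabilizer $\bar H^\torf$ over a product of special $K$-rational groups.

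The remaining stabilizer $\bar H^\torf$ is finite-by-torus, and its torus part $\bar H^\tor$ carries the $E$-tore structure built at the end of Section~\ref{section ono}. Applying Lemma~\ref{lemme Ono} produces quasi-trivial $E$-tori $\bar P,\bar Q$ and an $E$-isogeny $\bar Q\to\bar H^\tor{}^{\,r}\times\bar P$ with finite abelian kernel $\bar A$. Interpreting this isogeny geometrically, as in the Ono argument of \cite{GLA-BMWA} but now in the $E$-equivariant setting, I would build a chain of fibrations whose nontrivial pieces are torsors under the quasi-trivial (hence special $K$-rational) tori $\bar P,\bar Q$—clean by (i) and (ii)—and which replaces the torus part of the stabilizer by the finite abelian group $\bar A$, leaving $\bar H^\f$ untouched. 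The resulting space has special $K$-rational ambient group and finite stabilizer $\bar F$, an extension of $\bar H^\f$ by $\bar A$. Finally, Corollary~\ref{corollaire lemme sans nom} shows this space to be stably $K$-birational to a homogeneous space $X'$ of $\sln$ with the same gerbe, hence the same finite stabilizer $\bar F$; by (i) this preserves $P$. Composing the implications gives $P(X')\Rightarrow P(X)$ with $\bar F$ of the asserted shape, and the ``in particular'' follows at once.

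The hard part will be the construction in the second paragraph: producing, compatibly with the $K$-form data encoded by the extension $E$ and not merely over $\bar K$, the ambient group $\sl_{m,K}\times\sl_{n',K}$ together with the distinguished subgroup $N$ realizing $\bar H\cap N=\bar H^\ssu$ on the nose, so that the fibers of $q$ are genuinely homogeneous spaces of an $\ssu$-group with simply connected semisimple part as clause (iii) demands. This is exactly where the gerbe-theoretic tools of Sections~\ref{section gerbes dans SLn} and~\ref{section ono}—``embedding the gerbe into $\sln$'' and ``making the gerbe act on a torus''—are indispensable. A secondary, more bookkeeping difficulty is turning the abstract $E$-isogeny of Lemma~\ref{lemme Ono} into honest morphisms of homogeneous spaces and checking that each auxiliary quasi-trivial-torus fibration is clean for a good property: routine when a rational point is available, but here requiring the $E$-tore formalism because no such point exists.
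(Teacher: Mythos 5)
Your overall architecture (gerbe embeddings, Theorem~\ref{thm lemme sans nom} run with auxiliary factor $\sl_{n,K}$, clause (iii) to kill $\bar H^\ssu$, then Ono's lemma for the toric part) is the right one, but your very first reduction is a genuine gap on which the rest of the argument leans. A $z$-extension (even combined with stripping the unipotent radical) only achieves $G^\ss$ simply connected; it does \emph{not} make the ambient group special and $K$-rational --- a $z$-extension of $\mathrm{SO}_n$ involves a spin group, which is not special. You use this false claim twice: to justify the ``clean zigzag'' transferring $X$ to a homogeneous space of $\sl_{m,K}\times\sl_{n',K}$ with the same gerbe (in that zigzag the leg pointing away from $X$ is a torsor under the original ambient group $G$, and none of clauses (i)--(iii) applies to a torsor under a non-special, non-rational group), and implicitly in each later appeal to Corollary~\ref{corollaire lemme sans nom}. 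The paper avoids the transfer entirely: it applies Theorem~\ref{thm lemme sans nom} inside the original $G$ with $N=G^\ssu$ --- this is exactly why clause (iii) is formulated for groups $G=G^\ssu$ with $G^\ss$ simply connected --- landing on a homogeneous space of $G^\tor\times\sl_{n,K}$ with stabilizer $\bar H^\torf$; only \emph{then} does it make the ambient group special and $K$-rational, by a separate application of Ono's lemma to the ambient torus (a $K$-isogeny $Q\to G^\tor\times R$ with $Q$ quasi-trivial), and it is precisely this step that enlarges the stabilizer by a finite abelian group. That step is absent from your proposal, and without it you cannot reach $\sl_{n,K}$.

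Your last paragraph also underestimates the endgame. Lemma~\ref{lemme Ono} applied to the stabilizer torus yields only an $E^\f$-isogeny; the quasi-trivial $E^\f$-tori $\bar P,\bar Q$ have no $K$-forms in general, so there are no ``auxiliary quasi-trivial-torus fibrations'' defined over $K$ to declare clean. What the paper actually does is: first enlarge the stabilizer to $\bar R\rtimes\bar H_2$ by a semidirect-product construction on the extension $E_2$, realized geometrically as a surjection of homogeneous spaces \emph{admitting a section}, handled by clause (ii); then use the torsion of $H^2(E_3^\f,\bar H_3^\tor(\bar K))$ to extract a finite subextension, and prove that the resulting surjection $X_4\to X_3$ --- note the direction: from the finite-stabilizer space \emph{onto} the multiplicative-type one --- is a stable birational equivalence because its \emph{generic} fiber is a torsor under the quasi-trivial $K(\xi)$-torus $Q_\xi$, hence trivial since $H^1(K(\xi),Q_\xi)=0$. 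That generic-point argument is the crux of the reduction from multiplicative-type to finite stabilizers, not bookkeeping, and your sketch does not contain it.
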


La relation entre $X'$ et $X$ est m\^eme un peu plus forte que celle de l'\'enonc\'e. Voir \`a ce sujet la remarque \`a la fin du texte.

\begin{rem}
On peut aussi obtenir un r\'esultat pour des corps parfaits en caract\'eristique positive si l'on se restreint aux espaces homog\`enes \`a stabilisateurs \emph{lisses}.
\end{rem}

Avant de passer \`a la d\'emonstration du th\'eor\`eme, voyons quelques propri\'et\'es arithm\'etiques qui sont de bonnes propri\'et\'es des $k$-vari\'et\'es pour $k$ un corps de nombres. La derni\`ere correspond \`a notre th\'eor\`eme \ref{thm principal}.

\begin{pro}\label{prop P}
Soit $k$ un corps de nombres. Les propri\'et\'es suivantes sont de bonnes propri\'et\'es des $k$-vari\'et\'es :
\begin{itemize}
\item le principe de Hasse avec approximation r\'eelle ;
\item l'approximation faible ;
\item l'approximation faible hors de $S$ avec $S$ un ensemble de places \emph{non archim\'ediennes} (en particulier l'approximation r\'eelle) ;
\item la propri\'et\'e \eqref{BMPH}{\rm +}\eqref{AR} ;
\item la propri\'et\'e \eqref{BMAF} ;
\item la propri\'et\'e \eqref{BMPH}{\rm +}\eqref{BMAF} ;
\end{itemize}
\end{pro}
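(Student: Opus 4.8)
La d\'emonstration consiste \`a v\'erifier les trois axiomes (i), (ii) et (iii) de la d\'efinition de bonne propri\'et\'e pour chacune des propri\'et\'es list\'ees, le traitement \'etant parall\`ele selon qu'elles font intervenir ou non le groupe de Brauer. L'axiome (i) r\'esulte dans tous les cas de l'invariance par \'equivalence birationnelle stable. Pour les propri\'et\'es purement d'approximation ou de points (principe de Hasse, approximation faible, approximation faible hors de $S$), on utilise que ce sont des invariants birationnels des $k$-vari\'et\'es lisses --- deux telles vari\'et\'es partageant un ouvert dense commun, dans lequel les points locaux sont denses --- et que le produit par un espace affine les pr\'eserve, d'o\`u l'invariance stable (cf.~\cite{Sansuc81}). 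Pour les propri\'et\'es faisant intervenir l'ensemble de Brauer-Manin, on ajoute le fait que $\brnr$ co\"incide avec le groupe de Brauer d'une compactification lisse, invariant birationnel stable des vari\'et\'es lisses propres, de sorte que l'ensemble de Brauer-Manin lui-m\^eme est pr\'eserv\'e par \'equivalence birationnelle stable.

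Pour l'axiome (ii), je partirais d'un morphisme $f : X \to Y$ muni d'une section $s : Y \to X$, de sorte que $f \circ s = \id_Y$. La section fournit pour chaque place $v$ une inclusion $Y(k_v) \hookrightarrow X(k_v)$ compatible au plongement diagonal, et la fonctorialit\'e du groupe de Brauer (via $f^*$ et $s^*$, v\'erifiant $s^* \circ f^* = \id$) et de l'accouplement de Brauer-Manin assure que ces inclusions respectent les ensembles de Brauer-Manin : pour $\alpha \in \brnr X$ et $(P_v) \in \prod_v Y(k_v)$, on a $\inv_v(\alpha(s(P_v))) = \inv_v((s^*\alpha)(P_v))$ avec $s^*\alpha \in \brnr Y$. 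Partant alors d'une famille de points locaux de $Y$ (orthogonale \`a $\brnr Y$ dans les cas de Brauer-Manin), on la rel\`eve par $s$ en une famille analogue sur $X$ ; la propri\'et\'e $P(X)$ fournit un point rationnel, ou une approximation, sur $X$, que l'on redescend sur $Y$ par $f$ en utilisant la continuit\'e de $f$ pour les topologies $v$-adiques. On obtient ainsi $P(X) \Rightarrow P(Y)$ dans tous les cas.

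L'axiome (iii) est le point d\'elicat. Ici $f : X \to Y$ est surjectif, \`a fibres des espaces homog\`enes d'un groupe $G = G^\ssu$ avec $G^\ss$ simplement connexe et \`a stabilisateur g\'eom\'etrique $\bar H = \bar H^\ssu$ : ce sont exactement les espaces homog\`enes \`a stabilisateur connexe de groupes semi-simples simplement connexes (modulo radical unipotent) trait\'es par Borovoi, qui v\'erifient \emph{inconditionnellement} le principe de Hasse et l'approximation faible (cf.~\cite{Borovoi93}). J'appliquerais la m\'ethode des fibrations : \'etant donn\'ee une famille de points locaux $(P_v)$ de $X$ (orthogonale \`a $\brnr X$ dans les cas de Brauer-Manin), on projette en $(f(P_v))$ sur $Y$, on approche cette famille par un point $y \in Y(k)$ gr\^ace \`a $P(Y)$, puis on rel\`eve dans la fibre $X_y$ en utilisant que celle-ci v\'erifie le principe de Hasse et l'approximation faible. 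La difficult\'e principale, et la raison pour laquelle \eqref{BMPH} seule ne suffit pas, est d'assurer que la fibre $X_y$ poss\`ede des points locaux \`a \emph{toutes} les places, condition n\'ecessaire pour lui appliquer le principe de Hasse : on fixe un ensemble fini $S_0$ de places contenant les places archim\'ediennes et celles de mauvaise r\'eduction, on contr\^ole $y$ pr\`es des $f(P_v)$ en toute place de $S_0$ (d'o\`u $X_y(k_v) \neq \emptyset$ par continuit\'e en ces places), et la lissit\'e et la propret\'e de la fibration donnent $X_y(k_v) \neq \emptyset$ hors de $S_0$. Or contr\^oler $y$ aux places \emph{r\'eelles}, qui sont dans $S_0$, n'est possible qu'en disposant d'approximation en ces places : c'est exactement le r\^ole de l'approximation r\'eelle \eqref{AR} (et la raison pour laquelle on restreint $S$ aux places non archim\'ediennes dans l'\'enonc\'e). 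Enfin, la compatibilit\'e de l'accouplement de Brauer-Manin avec la fibration --- les classes de $\brnr X$ se lisant, sur ces fibres sans obstruction, via celles de $\brnr Y$ --- permet de traiter les variantes \eqref{BMPH} et \eqref{BMAF}, l'argument \'etabli dans \cite{GLA-BMWA} pour \eqref{BMAF} fournissant le mod\`ele dont les autres cas se d\'eduisent par le m\^eme sch\'ema.
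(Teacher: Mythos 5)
Votre strat\'egie globale (invariance birationnelle stable pour (i), section et fonctorialit\'e du groupe de Brauer pour (ii), m\'ethode des fibrations pour (iii)) est bien celle du texte, et vos v\'erifications de (i) et (ii) sont essentiellement correctes. Notez toutefois que pour (i), l'existence d'un point rationnel (ou local) ne se lit pas directement sur un ouvert dense commun, car un point rationnel de $X$ peut ne pas appartenir \`a cet ouvert~; le texte passe pour cela par \cite[Thm.~5.7]{Flo06} et le th\'eor\`eme de Lang-Nishimura, sp\'ecifiques aux espaces homog\`enes.

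En revanche, votre v\'erification de (iii) comporte une lacune r\'eelle. Vous proposez de contr\^oler le point $y\in Y(k)$ pr\`es des $f(P_v)$ \emph{en toute place de $S_0$}, ensemble qui contient des places finies de mauvaise r\'eduction~: or, pour les propri\'et\'es ne comportant que l'approximation r\'eelle (le principe de Hasse avec \eqref{AR}, ou \eqref{BMPH}{\rm +}\eqref{AR}), on ne dispose d'aucune approximation aux places finies et cette \'etape \'echoue. De plus, la fibration $X\to Y$ n'est pas propre (ses fibres sont des espaces homog\`enes affines), donc l'argument de lissit\'e et de propret\'e hors de $S_0$ n'est pas correct tel quel. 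L'ingr\'edient manquant --- celui que le texte utilise via \cite[Prop.~3.4]{Borovoi96} --- est que les fibres au-dessus d'un point rationnel de $Y$, \'etant des espaces homog\`enes d'un groupe $G=G^\ssu$ avec $G^\ss$ simplement connexe \`a stabilisateur g\'eom\'etrique $\bar H=\bar H^\ssu$, poss\`edent \emph{inconditionnellement} des $k_v$-points en toute place finie, en plus de v\'erifier le principe de Hasse et l'approximation faible. Il n'y a donc aucun besoin de contr\^oler $y$ aux places finies~: seul le contr\^ole aux places r\'eelles est n\'ecessaire, et c'est pr\'ecis\'ement ce que fournit \eqref{AR}.
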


\begin{rem}
Dans un travail r\'ecent \cite{HW}, Harpaz et Wittenberg montrent qu'une variante de la conjecture (E) (i.e. de l'exactitude de la suite \eqref{E}) est une bonne propri\'et\'e des $k$-vari\'et\'es, et que cette variante est v\'erifi\'ee par les espaces homog\`enes de $\sl_{n,k}$ \`a stabilisateurs finis. Les auteurs utilisent alors le th\'eor\`eme \ref{main thm} pour obtenir la conjecture (E) pour tout espace homog\`ene d'un groupe lin\'eaire connexe.
\end{rem}

\begin{proof}
Pour les espaces homog\`enes, le fait de poss\'eder un point ou un z\'ero-cycle de degr\'e 1 (local ou global) est invariant par stable birationalit\'e (utiliser par exemple \cite[Thm.~5.7]{Flo06} et le th\'eor\`eme de Lang-Nishimura) et il en va de m\^eme pour la densit\'e des points rationnels dans les points locaux. De m\^eme, le groupe de Brauer non ramifi\'e $\brnr X$ est un invariant birationnel stable et le quotient $\brnr X/\br k$ est toujours fini pour ces vari\'et\'es. Enfin, on sait bien que l'espace affine v\'erifie l'approximation faible. Ceci donne (i) pour toutes les propri\'et\'es de l'\'enonc\'e.

V\'erifier (ii) est facile pour les propri\'et\'es o\`u Brauer-Manin n'intervient pas. La fonctorialit\'e du groupe de Brauer donne les autres par un calcul direct.

Pour (iii), on sait d'apr\`es \cite[Prop.~3.4]{Borovoi96} que toute fibre du morphisme $X\to Y$ sur un point rationnel de $Y$ v\'erifie le principe de Hasse, l'approximation faible, et poss\`ede des points sur toute place finie. On obtient alors chacune des propri\'et\'es par la m\'ethode des fibrations.
\end{proof}

\begin{rem}
Sur un corps de nombres $k$ admettant une place r\'eelle, la propri\'et\'e ``avoir un point rationnel'' n'est pas une bonne propri\'et\'e des $k$-vari\'et\'es.
 
En revanche, sur un bon corps $K$ de dimension cohomologique $\leq 2$ et de caract\'eristique nulle (au sens de \cite[\S3.4]{BCTS08}), cette propri\'et\'e est bonne. Le th\'eor\`eme \ref{main thm} pour la propri\'et\'e ``avoir un point rationnel'' s'applique par exemple aux corps de nombres totalement imaginaires, aux corps $p$-adiques, aux corps des fractions d'anneaux int\`egres strictement hens\'eliens de dimension $2$ et de caract\'eristique r\'esiduelle nulle.
\end{rem}

\begin{proof}[Preuve du th\'eor\`eme \ref{main thm}]
On note $\rho : \mathcal{M} \to \TORS(G)$ la gerbe de Springer associ\'ee \`a $X$, $L$ le $K$-lien de $\mathcal{M}$, $\bar H$ le $\bar K$-groupe sous-jacent. On rappelle que l'on dispose des liens naturellement induits $L^\torf$ et $L^\f$ de groupes respectifs $\bar H^\torf$ et $\bar H^\f$, ainsi que les gerbes correspondantes $\mathcal{M}^\torf$ et $\mathcal{M}^\f$. Enfin, on note $E$ une extension de $\Gamma_K$ par $\bar H  (\bar K)$ associ\'ee \`a $\cal M$ et $E^\torf$, $E^\f$ les extension induites par les quotients $\bar H^\torf$, $\bar H^\f$ de $\bar H$.

On proc\`ede par r\'eductions successives.

\paragraph*{\'Etape 0 : On peut supposer $G^\ss$ simplement connexe.}
En effet, d'apr\`es \cite[Lem.~5.1]{Borovoi96}, il existe $K$-un groupe lin\'eaire connexe $G'$ avec $(G')^\ss$ simplement connexe et un morphisme surjectif $G'\to G$ \`a noyau torique. On peut alors regarder $X$ comme un espace homog\`ene de $G'$, ce qui nous permet de supposer que $G^\ss$ est simplement connexe dor\'enavant. Notons aussi que $\bar H^\f$ n'a pas \'et\'e modifi\'e dans le processus.

\paragraph*{\'Etape 1 : R\'eduction \`a un espace homog\`ene $X_1$ de $G_1=G^\tor\times\sl_{n,K}$ avec stabilisateur g\'eom\'etrique $\bar H_1=\bar H^\torf$.}
Posons $L_1:=L^\torf$, $\bar H_1:=\bar H^\torf$ et $\mathcal{M}_1:=\mathcal{M}^\torf$. Puisque le noyau $L^\ssu$ du morphisme $L \to L_1$ est repr\'esent\'e par le $\bar K$-groupe $\bar H^\ssu$, qui est caract\'eristique dans $\bar H$; et puisque $\bar H^\ssu$ est contenu dans $\bar G^\ssu$, on dispose d'un diagramme commutatif naturel de $K$-liens
\[
\xymatrix{
L  \ar@{->>}[r] \ar[d] & L_1 \ar[d] \\
\underline{\textup{lien}}(\bar G) \ar@{->>}[r] & \underline{\textup{lien}}(\bar G^\tor) \, .
}
\]
On en d\'eduit (cf.~\cite[IV.2.3.18]{Giraud}) un diagramme commutatif de morphismes de $K$-gerbes
\[
\xymatrix{
\mathcal{M}_X  \ar@{->>}[r] \ar[d] & \mathcal{M}_1 \ar[d] \\
\TORS(\bar G) \ar@{->>}[r] & \TORS(\bar G^\tor) \, .
}
\]

En outre, la proposition \ref{prop compatibilite des liens avec sln} assure qu'il existe un morphisme injectif de $K$-gerbes $\rho' : \mathcal{M}_1 \to \TORS(\sl_{n,K})$. On peut alors appliquer le th\'eor\`eme \ref{thm lemme sans nom} au diagramme pr\'ec\'edent et au morphisme $\rho'$ : il existe un $K$-espace homog\`ene $X_1$ de $G_1:=G^\tor \times \sl_{n,K}$ de gerbe de Springer isomorphe \`a $\mathcal{M}_1$; un $K$-espace homog\`ene $Y_1$ de $G\times \sl_{n,K}$ de gerbe de Springer $\mathcal{M}$; et des morphismes naturels de $K$-espaces homog\`enes 
\[\xymatrix{
& Y_1 \ar[rd]^q \ar[ld]_p & \\
X & & X_1 \, ,
}\]
de sorte que $p$ est un torseur sous $\sl_{n,K}$ et les fibres de $q$ sont des espaces homog\`enes de $G^\ssu$ \`a stabilisateurs g\'eom\'etriques $\bar H^\ssu$.

On voit alors que $Y_1$ est $K$-stablement birationnel \`a $X$, ce qui nous dit que $P(Y_1)\Rightarrow P(X)$. La description des fibres de $q$ nous permet ensuite de d\'eduire que $P(X_1)\Rightarrow P(Y_1)$, donc on peut se ramener \`a $X_1$.

\paragraph*{\'Etape 2 : R\'eduction \`a un espace homog\`ene $X_2$ de $\sl_{n,K}$ \`a stabilisateur $\bar H_2$, extension de $\bar H^\f$ par un $\bar K$-groupe de type multiplicatif et de gerbe de Springer $\mathcal{M}_2$ se surjectant naturellement sur $\mathcal{M}^\torf$.}
On rappelle que l'on dispose d'un espace homog\`ene $X_1$ de $G_1=G^\tor\times\sl_{n,K}$ \`a stabilisateur g\'eom\'etrique $\bar H_1=\bar H^\torf$ et de gerbe de Springer $\mathcal{M}_1=\mathcal{M}^\torf$.

D'apr\`es le lemme d'Ono, il existe un $K$-tore $R$ et une $K$-isog\'enie $Q\to G^\tor\times R$ avec $Q$ un $K$-tore quasi-trivial. Soit $A$ le noyau fini ab\'elien correspondant et consid\'erons alors le produit $G_2':=G_1\times R$ et l'isog\'enie $G_2:=\sl_{n,K}\times Q\to G_2'$, de noyau $A$. La vari\'et\'e $X_2':=X_1\times R$ est clairement un espace homog\`ene de $G_2'$ \`a stabilisateur g\'eom\'etrique $\bar H_1$, donc c'est aussi un espace homog\`ene de $G_2$ \`a stabilisateur $\bar H_2$, extension de $\bar H_1$ par $\bar A$. On note $\mathcal{M}_2'$ la gerbe de Springer de $X_2'$.

Or, notons que $\bar H_2$ est une extension de $\bar H^\f$ par un $\bar K$-groupe de type multiplicatif. En effet, le noyau de la projection naturelle $\bar H_2\to\bar H^\f$ est une extension de $\bar H^\tor$ par $\bar A$, qui sont tous les deux de type multiplicatif, le premier \'etant connexe (cf.~\cite[IV.\S1, Prop.~4.5]{DG}).\\

Comme la projection $X_2'\to X_1$ admet une section, on obtient que $P(X_2')\Rightarrow P(X_1)$, donc on peut se r\'eduire au cas de $X_2'$. De plus, puisque la projection est \'equivariante pour le morphisme $G_2\to G_2'\to G_1$, on dispose d'un morphisme surjectif naturel de $K$-gerbes $\mathcal{M}_2' \to \mathcal{M}_1 = \cal M^\torf$.

Enfin, puisque $G_2$ est un $K$-groupe sp\'ecial $K$-rationnel, il suffit d'appliquer le corollaire \ref{corollaire lemme sans nom} pour se ramener par $K$-stable birationalit\'e au cas d'un espace homog\`ene $X_2$ de $\sl_{n,K}$ \`a stabilisateur g\'eom\'etrique $\bar H_2$. La gerbe de Springer $\cal M_2$ de $X_2$ est isomorphe \`a $\cal M_2'$.

\paragraph*{\'Etape 3 : R\'eduction \`a un espace homog\`ene $X_3$ de $\sl_{n,K}$ \`a stabilisateur g\'eom\'etrique $\bar H_3$, extension de $\bar H^\f$ par un $\bar K$-groupe de type multiplicatif dont la composante neutre est $E^\f$-isog\`ene \`a un $E^\f$-tore quasi-trivial.} On dispose maintenant d'un espace homog\`ene $X_2$ de $G_2$, de gerbe $\mathcal{M}_2$ se surjectant sur $\mathcal{M}^\torf$, et \`a stabilisateur g\'eom\'etrique $\bar H_2$, extension de $\bar H^\f$ par un $\bar K$-groupe de type multiplicatif. En particulier, $\bar H_2^\f$ est une extension de $\bar H^\f$ par un $\bar K$-groupe fini ab\'elien qui agit trivialement sur $\bar H_2^\tor$ par conjugaison. Donc d'apr\`es la construction de la section \ref{section ono}, le tore $\bar H_2^\tor$ poss\`ede une structure naturelle de $E^\f$-tore.

Appliquons le lemme \ref{lemme Ono} \`a $\bar H_2^\tor$. On obtient un $E^\f$-tore $\bar R$ et une $E^\f$-isog\'enie entre $\bar R\times\bar H_2^\tor$ et un $E^\f$-tore quasi-trivial $\bar Q$. Si l'on note $E_2$ l'extension de $\Gamma_K$ par $\bar H_2(\bar K)$ correspondant \`a $\mathcal{M}_2$, on peut alors consid\'erer le produit semi-direct $\bar R(\bar K)\rtimes E_2$. Ce groupe s'ins\`ere naturellement dans une suite exacte
\[1\to \bar R(\bar K)\rtimes \bar H_2(\bar K) \to \bar R(\bar K)\rtimes E_2 \to \Gamma_K \to 1 \, ,\]
et on note $\mathcal{M}_3$ la gerbe associ\'ee. Le $\bar K$-groupe $\bar H_3 := R\rtimes \bar H_2$ est naturellement extension de $\bar H^\f$ par un $\bar K$-groupe de type multiplicatif et admet pour composante neutre le $E^\f$-tore $\bar R\times\bar H_2^\tor$, qui est $E^\f$-isog\`ene \`a $\bar Q$. C'est donc le stabilisateur g\'eom\'etrique souhait\'e.\\

Le diagramme commutatif \`a lignes exactes suivant (avec des fl\`eches $\sigma$ et $\pi$ d\'efinies de fa\c con \'evidente)
\[\xymatrix{
1 \ar[r] & \bar H_3 \ar@{->>}[d]_\pi \ar[r] & \bar R\rtimes E_2 \ar@{->>}[d]_\pi \ar[r] & \Gamma_K \ar@{=}[d] \ar[r] & 1 \\
1 \ar[r] & \bar H_2 \ar@{^{(}->}@/_1pc/[u]_\sigma \ar[r] & E_2 \ar@{^{(}->}@/_1pc/[u]_\sigma \ar[r] & \Gamma_K \ar[r] & 1,
}\]
correspond \`a des morphismes de $K$-gerbes $\sigma : \mathcal{M}_2\to \mathcal{M}_3$ et $\pi : \mathcal{M}_3\to \mathcal{M}_2$ tels que $\pi \circ \sigma = \id$. En particulier, $\mathcal{M}_3$ se surjecte sur $\mathcal{M}^\f$.

On choisit un morphisme injectif $\iota:\mathcal{M}_3 \to \TORS(\sl_{n,K})$ comme \`a la proposition \ref{prop compatibilite des liens avec sln}. Alors la proposition \ref{prop existence des espaces homogenes}, appliqu\'ee \`a
\[\xymatrix{
\mathcal{M}_2 \ar[d]_{\iota\circ\sigma} \ar[r]^{\sigma} & \mathcal{M}_3 \ar[d]^\iota \\
\TORS(\sl_{n,K}) \ar@{=}[r] & \TORS(\sl_{n,K}),
}\] assure l'existence d'un espace homog\`ene $X_2'$ (resp. $X_3$) de $\sl_{n,K}$ \`a stabilisateur g\'eom\'etrique $\bar H_2$ (resp. $\bar H_3$) et de gerbe de Springer $\mathcal{M}_2$ (resp. $\mathcal{M}_3$), et d'un $K$-morphisme d'espaces homog\`enes $X_2'\to X_3$. En outre le corollaire \ref{corollaire lemme sans nom} assure que $X_2'$ et $X_2$ sont stablement birationnels, donc $P(X_2') \Leftrightarrow P(X_2)$.

Notons que l'on dispose d\'esormais d'un diagramme commutatif de $\bar K$-groupes alg\'ebriques :
\[\xymatrix@R=3mm{
\bar H_2 \ar@{_{(}->}[dr]_\sigma \ar@{^{(}->}[drr]^s & & \\
& \bar H_3 \ar@{^{(}->}[r] & \sl_{n,\bar K}. \\
\bar R \ar@{^{(}->}[ur] \ar@{_{(}->}[urr]_i & &
}\]
Consid\'erons maintenant une copie $\sl_{n,K}'$ du m\^eme groupe $\sl_{n,K}$ avec des copies $s'$ et $i'$ des plongements de $\bar H_2$ et de $\bar R$ dans $\sl_{n,\bar K}'$. On fait agir $\sl_{n,K}$ sur $\sl_{n,K}'$ par conjugaison. Le produit semi-direct $\bar H_3=\bar R\rtimes \bar H_2$ se plonge alors dans le produit semi-direct $G_3:=\sl_{n,\bar K}'\rtimes\sl_{n,K}$ via $i'$ et $s$. En d'autres mots, on a le diagramme commutatif \`a lignes exactes scind\'ees suivant :
\[\xymatrix@R=5mm{
1 \ar[r] & \bar R \ar[r] \ar@{_{(}->}[d]_{i'} & \bar H_3 \ar[r] \ar@{_{(}->}[d]_{i'\rtimes s} & \bar H_2 \ar[r] \ar@{_{(}->}[d]^{s} \ar@/_1pc/[l] & 1 \\
1 \ar[r] & \sl_{n,\bar K}' \ar[r] & \bar G_3 \ar[r] & \sl_{n,\bar K} \ar[r] \ar@/_1pc/[l] & 1.
}\]

Puisque l'extension $E_3$ correspond au produit semi-direct $R\rtimes E_2$ et que c'est ainsi qu'on obtient $\sigma : \mathcal{M}_2\to \mathcal{M}_3$ et $\pi : \mathcal{M}_3\to \mathcal{M}_2$, on d\'eduit de ce diagramme un diagramme commutatif de $K$-gerbes
\[
\xymatrix{
\mathcal{M}_3 \ar@{->>}[r]^\pi \ar@{^{(}->}[d] & \mathcal{M}_2 \ar@{^{(}->}[d] \ar@/_1pc/[l]_{\sigma} \\
\TORS(G_3) \ar@{->>}[r] & \TORS(\sl_{n,K}) \ar@/_1pc/[l] \, ,
}
\]

Par la proposition \ref{prop existence des espaces homogenes}, ce diagramme induit un morphisme surjectif d'espaces homog\`enes $\pi : X_3' \to X_2'$, o\`u $X_3'$ est un $K$-espace homog\`ene de $G_3$ et de gerbe $\mathcal{M}_3$. On en d\'eduit aussi que ce morphisme est scind\'e par un morphisme $\sigma : X_2' \to X_3'$.
Par cons\'equent, on sait que $P(X_3') \Rightarrow P(X_2')$. En outre, le groupe $G_3$ est sp\'ecial et $K$-rationnel, donc le corollaire \ref{corollaire lemme sans nom} assure que $X_3'$ est stablement birationnel \`a l'espace homog\`ene $X_3$ de $\sl_{n,K}$ dont la gerbe est $\mathcal{M}_3$, donc $P(X_3') \Leftrightarrow P(X_3)$.

Finalement, on a bien $P(X_3) \Rightarrow P(X_2)$, donc	on s'est ramen\'e \`a un espace homog\`ene $X_3$ de $\sl_{n,K}$ \`a stabilisateur $\bar H_3$ et gerbe de Springer $\mathcal{M}_3$ se surjectant sur $\mathcal{M}^\f$.

\paragraph*{\'Etape 4 : R\'eduction \`a un espace homog\`ene $X_4$ de $\sl_{n,K}$ \`a stabilisateur g\'eom\'etrique $\bar H_4$, extension de $\bar H^\f$ par un $\bar K$-groupe fini ab\'elien.} On rappelle que l'on dispose d'un espace homog\`ene $X_3$ de gerbe $\mathcal{M}_3$ se surjectant sur $\mathcal{M}^\f$, et dont le stabilisateur g\'eom\'etrique $\bar H_3$ est extension de $\bar H^\f$ par un groupe de type multiplicatif dont la composante neutre $\bar H_3^\tor$ est un $E^\f$-tore $E^\f$-isog\`ene \`a un $E^\f$-tore quasi-trivial $\bar Q$. Rappelons que $E_3$ (resp.~$E_3^\f$) est une extension de $\Gamma_K$ par $\bar H_3(\bar K)$ (resp.~$\bar H_3^\f$) d\'efinissant $\mathcal{M}_3$ (resp.~$\mathcal{M}_3^\f$). La construction du $E_3$-tore $\bar H_3^\tor$ (cf.~la section \ref{section ono}) nous fournit la suite exacte
\[1\to \bar H_3^\tor(\bar K)\to E_3\to E_3^\f\to 1 \, .\]
Puisque $E_3^\f$ est un groupe profini, cette extension correspond \`a une classe $\zeta\in H^2(E_3^\f,\bar H_3^\tor(\bar K))$. Or ce groupe est de torsion (car $E_3^\f$ est profini), donc il existe un entier $m \geq 1$ tel que $m\zeta=0$. Cela veut dire que,  si l'on note par $\bar A'$ le $\bar K$-sous-groupe de $m$-torsion de $\bar H_3^\tor$, la suite exacte longue de cohomologie nous permet de d\'eduire que l'extension ci-dessus provient d'une extension de $E_3^\f$ par $\bar A'(\bar K)$. Autrement dit, on a un diagramme commutatif \`a lignes exactes
\[\xymatrix{
1 \ar[r] & \bar A'(\bar K) \ar[r] \ar[d] & E_4' \ar[r] \ar[d] & E_3^\f \ar[r] \ar@{=}[d] & 1 \\
1 \ar[r] & \bar H_3^\tor(\bar K) \ar[r] & E_3 \ar[r] & E_3^\f \ar[r] & 1 \, .
}\]
On d\'efinit alors $\bar A$ comme le noyau du morphisme compos\'e de $E^\f$-tores
\[\rho:\bar H_3^\tor\xrightarrow{\times m}\bar H_3^\tor \to \bar Q \, .\]
Puisque $\bar A'\subset \bar A\subset \bar H_3^\tor$, l'extension $E_4'$ induit une extension $E_4$ de $E_3^\f$ par $\bar A(\bar K)$ qui factorise le diagramme pr\'ec\'edent. Or, on dispose d'un morphisme naturel $E_4\to\Gamma_K$ via son quotient $E_3^\f$ et l'on d\'efinit alors $\bar H_4$ comme le $\bar K$-groupe fini correspondant au noyau (abstrait) de ce morphisme. On obtient alors la suite exacte sup\'erieure dans le diagramme commutatif suivant
\[\xymatrix{
1 \ar[r] & \bar H_4(\bar K) \ar[r] \ar[d] & E_4 \ar[r] \ar[d] & \Gamma_K \ar[r] \ar@{=}[d] & 1 \\
1 \ar[r] & \bar H_3(\bar K) \ar[r] & E_3 \ar[r] & \Gamma_K \ar[r] & 1 \, .
}\]
On interpr\`ete ce dernier diagramme comme un morphisme injectif de $K$-gerbes $\mathcal{M}_4 \to \mathcal{M}_3$. En utilisant le morphisme injectif $\mathcal{M}_3 \to \TORS(\sl_{n,K})$ donn\'e par l'espace homog\`ene $X_3$, la proposition \ref{prop existence des espaces homogenes} assure donc l'existence d'un espace homog\`ene $X_4$ de $\sl_{n,K}$ de gerbe $\mathcal{M}_4$, donc de stabilisateur g\'eom\'etrique $\bar H_4$, muni d'un morphisme surjectif $X_4\to X_3$ \'equivariant pour les actions de $\sl_{n,K}$.

Notons enfin que le groupe $\bar H_4$ est une extension $\bar H^\f$ par un $\bar K$-groupe ab\'elien. En effet, le noyau du morphisme compos\'e $\bar H_4\to\bar H_3\to\bar H^\f$ est contenu dans un $\bar K$-groupe de type multiplicatif par construction de $\bar H_3$. Et le morphisme est clairement surjectif car $\bar H_4$ se surjecte sur $\bar H_3^\f$ par construction.\\

On affirme que l'espace $X_4$ est $K$-stablement birationnel \`a $X_3$, ce qui suffit pour conclure la d\'emonstration du th\'eor\`eme puisqu'alors $P(X_4) \Leftrightarrow P(X_3)$. Pour d\'emontrer cela, il suffira de d\'emontrer que, si l'on note $\xi$ le point g\'en\'erique de $X_3$ et $K(\xi)$ le corps de fonctions correspondant, alors la fibre g\'en\'erique $X_{4,\xi}$ est $K(\xi)$-rationnelle.

Le point g\'en\'erique $\xi$ de $X_3$ nous donne une $K(\xi)$-forme naturelle $H_{3,\xi}$ de $\bar H_3$ et donc une $K(\xi)$-forme $H^\f_{3,\xi}$ de $\bar H_3^\f$. On en d\'eduit une section $\Gamma_{K(\xi)}\to E_{3,\xi}^\f$ de l'extension associ\'ee \`a $\mathcal{M}_{3,\xi}^\f$ qui d\'efinit l'action de $\Gamma_{K(\xi)}$ sur $\bar H_3^\f$. Or $E^\f$ est un quotient de $E_3^\f$ dont le noyau correspond au noyau de la projection $\bar H_3^\f\to\bar H^\f$. Donc ce noyau est stable par l'action de $E_3^\f$ par conjugaison, et donc stable par l'action de $E_{3,\xi}^\f$ et par celle de $\Gamma_{K(\xi)}$. En d'autres mots, ce noyau est un $K(\xi)$-sous-groupe distingu\'e de $H_{3,\xi}^\f$ et l'on obtient ainsi une $K(\xi)$-forme $H^\f_\xi$ de $\bar H^\f$. La $K(\xi)$-gerbe $\mathcal{M}_{\xi}^\f$ est donc isomorphe \`a $\TORS(H^\f_\xi)$ et on obtient une section $s_\xi:\Gamma_{K(\xi)}\to E_\xi^\f$ de l'extension correspondant \`a $\mathcal{M}_\xi^\f$.

Notons en outre que la fonctorialit\'e de la construction des $E^\f$-tores fait de tout $E^\f$-tore un $E_{\xi}^\f$-tore (via la projection $E^\f_\xi\to E^\f$). En particulier, puisque l'on dispose de la section $s_\xi$ de $E^\f_\xi$, tout $E^\f$-tore admet une $K(\xi)$-forme naturelle (pour $\bar H_3^\tor$, c'est justement $H_{3,\xi}^\tor$). On en d\'eduit que $\bar Q$ admet une $K(\xi)$-forme $Q_\xi$ qui est quasi-triviale en tant que $K(\xi)$-tore et le morphisme $\rho : H_{3,\xi}^\tor \to Q_\xi$ est alors un $K(\xi)$-morphisme. D'o\`u une $K(\xi)$-forme naturelle $A_\xi$ de $\bar A$ et une suite exacte de $K(\xi)$-groupes ab\'eliens
\[1\to A_\xi \to H_{3,\xi}^\tor \xrightarrow{\rho} Q_\xi \to 1 \, .\]

Or la vari\'et\'e $X_{4,\xi}$ correspond \`a la fibre du morphisme $X_{4,K(\xi)}\to X_{3,K(\xi)}$ au $K(\xi)$-point $\xi$. Le groupe $\sl_{n,K(\xi)}$ agit sur les deux vari\'et\'es de fa\c con compatible, donc le groupe $H_{3,\xi}$ le fait aussi en tant que sous-$K(\xi)$-groupe de $\sl_{n,K(\xi)}$. Mais puisque ce sous-groupe fixe le point $\xi$, il agit naturellement par restriction sur la fibre $X_{4,\xi}$. Cette action est transitive sur $\overline{K(\xi)}$ et a pour stabilisateur $\bar H_{4,\overline{K(\xi)}}$. Le tore $H_{3,\xi}^\tor$ agit alors sur $X_{4,\xi}$ par restriction et l'on v\'erifie ais\'ement que cette action est aussi transitive sur $\overline{K(\xi)}$ puisque $\bar H_{4,\overline{K(\xi)}}$ rencontre toutes les composantes connexes de $H_{3,\overline{K(\xi)}}$. Le stabilisateur g\'eom\'etrique de cette action est clairement $H_{3,\overline{K(\xi)}}^\tor\cap \bar H_{4,\overline{K(\xi)}}=\bar A_{\overline{K(\xi)}}$, qui est d\'efini sur $K(\xi)$ comme on l'a d\'ej\`a d\'emontr\'e. La vari\'et\'e $X_{4,\xi}$ est alors un \emph{torseur} sous le $K(\xi)$-groupe quotient $H_{3,\xi}^\tor/A_\xi=Q_\xi$. Or ce tore est quasi-trivial, donc $K(\xi)$-rationnel, et $H^1(K(\xi),Q_\xi)=0$. Par cons\'equent, $X_{4,\xi}$ est isomorphe \`a $Q_\xi$, et donc est $K(\xi)$-rationnelle.

En posant $\bar F=\bar H_4$ et $X'=X_4$, ceci conclut la preuve du th\'eor\`eme.
\end{proof}

\begin{rem}
En suivant la preuve, on peut constater que non seulement $\bar F$ est une extension de $\bar H^\f$ par un $\bar K$-groupe ab\'elien, mais en fait on a un morphisme naturel et surjectif de $K$-liens $L_{X'}\to L^\f$ qui envoie $\cal M_{X'}$ en $\cal M^\f$. De plus, lorsque $X$ poss\`ede un $K$-point, il est facile de voir que cette construction fournit un espace homog\`ene $X'$ qui poss\`ede aussi un $K$-point et le morphisme $\bar F\to \bar H^\f$ est alors d\'efini sur $K$ (une autre fa\c con de se convaincre de ce fait est de suivre la preuve dans \cite{GLA-BMWA}). Ce fait devrait \^etre utile pour obtenir des r\'esultats inconditionnels sur \eqref{BMPH} pour des stabilisateurs non connexes \`a partir de r\'esultats positifs pour des stabilisateurs finis, suivant l'exemple de \cite[\S5]{GLA-BMWA}.
\end{rem}

\bigskip

\noindent
Cyril \textsc{Demarche}
\newline Sorbonne Universit\'es, UPMC Univ Paris 06 
\newline Institut de Math\'ematiques de Jussieu-Paris Rive Gauche
\newline UMR 7586, CNRS, Univ Paris Diderot, Sorbonne Paris Cit\'e, F-75005, Paris, France  
\newline and
\newline D\'epartement de math\'ematiques et applications,
\newline \'Ecole normale sup\'erieure, CNRS, PSL Research University,
\newline 45 rue d'Ulm, 75005 Paris, France

\smallskip

\noindent cyril.demarche@imj-prg.fr

\bigskip

\noindent
Giancarlo \textsc{Lucchini Arteche}
\newline Departamento de Matem\'aticas
\newline Facultad de Ciencias
\newline Universidad de Chile
\newline Las Palmeras 3425, \~Nu\~noa, Santiago, Chile

\smallskip

\noindent luco@uchile.cl

\end{document}